\crefname{equation}{}{}
\newtheorem{theorem}{Theorem}[section]
\newtheorem{proposition}[theorem]{Proposition}
\newtheorem{lemma}[theorem]{Lemma}
\theoremstyle{definition}
\newtheorem{definition}[theorem]{Definition}
\newtheorem{problem}[theorem]{Problem}
\newtheorem{example}[theorem]{Example}
\theoremstyle{remark}
\newtheorem*{remark}{Remark}
\newcommand{\floor}[1]{\left\lfloor #1 \right\rfloor}
\newcommand{\paren}[1]{\left( #1 \right)}
\DeclareMathOperator{\sgn}{sgn}
\newcommand{\rc}{\mathrm{rc}}
\DeclareMathOperator{\Aut}{Aut}
\DeclareMathOperator{\Sub}{Sub}
\tikzset{every picture/.append style={
scale=0.7,
baseline={([yshift=-.8ex]current bounding box.center)},
font=\small
}
}
\tikzstyle{v} = [draw,circle,inner sep=0pt, minimum width=2.5pt,fill=black]
\tikzstyle{e}=[postaction={decorate,decoration={
\tikzstyle{b} = [line width=2.5pt, blue]
\tikzstyle{r} = [line width=2.5pt, red ]
\title{Impartial digraphs}
\author{Yufei Zhao}
\address{Department of Mathematics, Massachusetts Institute of Technology, Cambridge, MA, USA}
\email{yufeiz@mit.edu}
\author{Yunkun Zhou}
\address{Department of Mathematics, Stanford University, Stanford, CA, USA}
\email{yunkunzhou@stanford.edu}
\thanks{Zhao was supported by NSF Awards DMS-1362326 and DMS-1764176, the MIT Solomon Buchsbaum Fund, and a Sloan Research Fellowship. Zhou was supported by the MIT Undergraduate Research Opportunities Program}
\begin{document}

\begin{abstract}
    We prove a conjecture of Fox, Huang, and Lee that characterizes directed graphs that have constant density in all tournaments: they are disjoint unions of trees that are each constructed in a certain recursive way.
\end{abstract}

\maketitle

\section{Introduction} \label{sec:intro}

In this paper we give a complete answer to the following question (our notation: $\vec H$ with an arrow on top denotes a directed graph, or \emph{digraph}, and $H$ without the arrow denotes the underlying undirected graph of $\vec H$; we write $|H|$ for the number of vertices of $H$):
\begin{quote}
Which directed graphs $\vec H$ have the property that for some $n \ge |H|$, all $n$-vertex tournaments contain the same number of copies of $\vec H$ as subgraphs?
\end{quote}
We say that a digraph $\vec H$ is \emph{impartial} if it has the above property. Note that if the above property holds for some $n_0 \ge |H|$, then it holds for all $n \ge n_0$, due to a double-counting argument considering all $n_0$-vertex subsets of $\vec H$.

\begin{example} A single directed edge is clearly impartial. The following graph is also impartial.
\begin{equation}\label{eq:intro-ex1}
\begin{tikzpicture}
	\node[v] (1) at (0,0) {};
	\node[v] (2) at (1,0) {};
	\node[v] (3) at (2,0) {};
	\node[v] (4) at (3,0) {};
	\draw[e] (1) to (2);
	\draw[e] (3) to (2);
	\draw[e] (4) to (3);
\end{tikzpicture}
\end{equation}
Indeed, all tournaments on $n$ vertices have the same number of copies of
\[
\begin{tikzpicture}
	\node[v] (1) at (0,0) {};
	\node[v] (2) at (1,0) {};
	\node[v] (3) at (2,0) {};
	\node[v] (4) at (3,0) {};
	\draw[e] (1) to (2);
	\draw[e] (4) to (3);
\end{tikzpicture}
\]
and each such subgraph uniquely extends to a copy of \cref{eq:intro-ex1}.

We can continue this recursive argument and deduce that
\begin{equation}\label{eq:intro-ex2}
\begin{tikzpicture}
	\node[v] (1) at (0,0) {};
	\node[v] (2) at (1,0) {};
	\node[v] (3) at (2,0) {};
	\node[v] (4) at (3,0) {};
	\node[v] (1a) at (0,1) {};
	\node[v] (2a) at (1,1) {};
	\node[v] (3a) at (2,1) {};
	\node[v] (4a) at (3,1) {};
	\draw[e] (1) to (2);
	\draw[e] (3) to (2);
	\draw[e] (4) to (3);
	\draw[e] (1a) to (2a);
	\draw[e] (3a) to (2a);
	\draw[e] (4a) to (3a);
	\draw[e] (3)--(3a);
\end{tikzpicture}
\end{equation}
is impartial. Indeed, the digraph obtained by removing the middle edge from \cref{eq:intro-ex2} is impartial by earlier observations, and any copy \cref{eq:intro-ex2} without the middle edge in a tournament can be uniquely extended to a copy of \cref{eq:intro-ex2}.
\end{example}

The above argument generalizes to all directed graphs that can be constructed in the following recursive manner.

\begin{definition} \label{def:rbm}
We say that a digraph $\vec T$ is \emph{recursively bridge-mirrored} if it can be constructed recursively in the following manner:
\begin{itemize}
    \item The tree with one vertex is recursively bridge-mirrored;
    \item Suppose $\vec T$ is recursively bridge-mirrored. Mark an arbitrary vertex of $\vec T$ as its root, and create a new graph by taking two identical copies of this rooted $\vec T$ and adding a new directed edge from one root to the other. The resulting digraph is also said to be recursively bridge-mirrored.
\end{itemize}
We say that an undirected graph $T$ is recursively bridge-mirrored if it can be constructed as above but forgetting edge orientations.
\end{definition}

\begin{example}
	Here is a sequence of recursively bridge-mirrored digraphs constructed recursively as in the definition.
	\[
	\begin{tikzpicture}
		\begin{scope}
			\node[v] (0) at (0,1) {};
		\end{scope}
		
		\begin{scope}[xshift=1.5cm]
			\node[v] (0) at (0,1) {};
			\node[v] (1) at (1,1) {};
			\draw[e] (0)--(1);				
		\end{scope}
		
		\begin{scope}[xshift=4cm]
			\node[v] (0) at (0,1) {};
			\node[v] (1) at (1,1) {};
			\node[v] (2) at (1,0) {};
			\node[v] (3) at (0,0) {};
			\draw[e] (0)--(1);				
			\draw[e] (2)--(1);
			\draw[e] (3)--(2);
		\end{scope}
		
		\begin{scope}[xshift=6.5cm]
			\node[v] (0) at (0,1) {};
			\node[v] (1) at (1,1) {};
			\node[v] (2) at (1,0) {};
			\node[v] (3) at (0,0) {};
			\node[v] (4) at (3,1) {};
			\node[v] (5) at (2,1) {};
			\node[v] (6) at (2,0) {};
			\node[v] (7) at (3,0) {};

			\draw[e] (0)--(1);				
			\draw[e] (2)--(1);
			\draw[e] (3)--(2);
			\draw[e] (4)--(5);				
			\draw[e] (6)--(5);
			\draw[e] (7)--(6);
			\draw[e] (1)--(5);
		\end{scope}
		
		\begin{scope}[xshift=11cm]
		
			\node[v] (0) at (0,1) {};
			\node[v] (1) at (1,1) {};
			\node[v] (2) at (1,0) {};
			\node[v] (3) at (0,0) {};
			\node[v] (4) at (3,1) {};
			\node[v] (5) at (2,1) {};
			\node[v] (6) at (2,0) {};
			\node[v] (7) at (3,0) {};
			\node[v] (0a) at (7,1) {};
			\node[v] (1a) at (6,1) {};
			\node[v] (2a) at (6,0) {};
			\node[v] (3a) at (7,0) {};
			\node[v] (4a) at (4,1) {};
			\node[v] (5a) at (5,1) {};
			\node[v] (6a) at (5,0) {};
			\node[v] (7a) at (4,0) {};

			\draw[e] (0)--(1);				
			\draw[e] (2)--(1);
			\draw[e] (3)--(2);
			\draw[e] (4)--(5);				
			\draw[e] (6)--(5);
			\draw[e] (7)--(6);
			\draw[e] (1)--(5);
			\draw[e] (0a)--(1a);				
			\draw[e] (2a)--(1a);
			\draw[e] (3a)--(2a);
			\draw[e] (4a)--(5a);				
			\draw[e] (6a)--(5a);
			\draw[e] (7a)--(6a);
			\draw[e] (1a)--(5a);
			\draw[e] (4)--(4a);

		\end{scope}

	\end{tikzpicture}
	\]
\end{example}

All recursively bridge-mirrored graphs are trees. The earlier argument easily generalizes to show that all recursively bridge-mirrored digraphs are impartial. Our main result proves the converse.

\begin{theorem} \label{thm:main}
A directed graph is impartial if and only if it is a disjoint union of recursively bridge-mirrored digraphs.
\end{theorem}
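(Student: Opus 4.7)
My plan is to prove both implications by induction. For the \emph{if} direction, induct on the recursive construction of bridge-mirrored digraphs. Suppose $\vec T$ is (inductively) impartial, and let $\vec T'$ be obtained by identifying two rooted copies of $\vec T$ and adding a new directed edge $r_1 \to r_2$ between the roots. For any tournament $S$ on $n$ vertices, write
\[N(\vec T', S) = \sum_{(\phi_1, \phi_2)} \one[\phi_1(r) \to \phi_2(r) \text{ in } S],\]
the sum being over ordered pairs of vertex-disjoint embeddings $\phi_i \colon V(\vec T) \to V(S)$. The swap involution $(\phi_1, \phi_2) \mapsto (\phi_2, \phi_1)$ matches pairs contributing $\phi_1(r) \to \phi_2(r)$ with pairs contributing $\phi_2(r) \to \phi_1(r)$, so exactly half of the disjoint ordered pairs contribute, giving $2 N(\vec T', S) = \sum_{\phi_1} N(\vec T, S[V(S) \setminus \phi_1(V(\vec T))])$. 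By the inductive hypothesis (applied at both vertex counts $n$ and $n - |V(\vec T)|$), both $N(\vec T, S)$ and the inner count depend only on the tournament's vertex count, so $N(\vec T', S)$ is constant. Disjoint unions are handled by a similar decomposition.

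For the \emph{only if} direction, I would induct on $|V(\vec H)|$. Disconnected case: if $\vec H = \vec H_1 \sqcup \vec H_2$ is impartial, the factorization
\[N(\vec H, T) = \sum_{A \subseteq V(T), |A| = |V(\vec H_1)|} N(\vec H_1, T[A])\, N(\vec H_2, T[V(T) \setminus A])\]
together with varying $T$ while fixing $T[A]$ on a chosen subset, forces each $\vec H_i$ to be impartial, so we may reduce to $\vec H$ connected. Next I would show the underlying $H$ must be a tree: $N(\vec H, T)$ is a multilinear polynomial in the edge-orientation variables $x_{ij} \in \{\pm 1\}$, and impartiality is equivalent to all nonzero-frequency Fourier coefficients vanishing; an undirected cycle in $H$ would contribute an uncancelled coefficient at a suitable edge set $F$. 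To extract the recursive bridge-mirrored structure, compare $N(\vec H, T)$ with $N(\vec H, T')$ where $T, T'$ differ only on a single edge $ab$. After cancellation, this yields antisymmetric bilinear identities of the form
\[\sum_{e = u \to v \in E(\vec H)} \bigl[E_{\vec H_u}(a, T)\, E_{\vec H_v}(b, T) - E_{\vec H_u}(b, T)\, E_{\vec H_v}(a, T)\bigr] = 0,\]
where $\vec H_u, \vec H_v$ are the two components of $\vec H - e$ (containing $u, v$ respectively) and $E_{\vec H_u}(x, T)$ counts embeddings of $\vec H_u$ into $T$ sending its root $u$ to $x$ (with inclusion--exclusion corrections for overlapping ranges). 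These identities, holding for all $a, b, T$, should force a canonical pairing on $E(\vec H)$ and identify a distinguished ``top bridge'' $e^* = r_1 \to r_2$ whose two halves are isomorphic as rooted digraphs via $r_1 \mapsto r_2$; each half is then impartial by the disconnected-case argument, closing the induction.

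The main obstacle will be extracting the rec.\ bridge-mirrored structure from the edge-swap identities in the connected case. These identities give a system of count-level constraints rather than direct structural information, so the crux is (a) pinning down the distinguished bridge $e^*$ (expected to be a uniquely balanced bridge that splits $\vec H$ into two equal halves, itself a claim to justify), and (b) upgrading the bilinear count identities to an actual rooted-digraph isomorphism between the two halves. I expect this requires a careful choice of ``test'' tournaments rich enough to distinguish non-isomorphic rooted subtrees, together with an auxiliary induction on the centroid decomposition or the tree's bridge hierarchy.
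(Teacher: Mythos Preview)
Your \emph{if} direction is fine and coincides with the paper's easy recursive argument.

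The \emph{only if} direction, however, has real gaps at each of the three stages you identify.

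\textbf{Reduction to components.} Your proposed factorization
\[
N(\vec H, T) = \sum_{A} N(\vec H_1, T[A])\, N(\vec H_2, T[V(T) \setminus A])
\]
is correct, but ``varying $T$ while fixing $T[A]$ on a chosen subset'' does not isolate a single summand: every subset $A$ of the right size contributes, and changing edges outside a fixed $A_0$ also changes $T[A]$ for other $A$'s that overlap the complement. The paper handles this via a graph-limit reformulation: impartiality is equivalent to the polynomial identity $P_{\vec H,n}(\bm a;\bm b)=(a_1+\cdots+a_n)^{|H|}$, and since $P_{\vec H,n}=\prod_j P_{\vec H_j,n}$, unique factorization in a polynomial ring forces each $P_{\vec H_j,n}$ to be the appropriate power. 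Your count-level argument would need a genuine substitute for this.

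\textbf{Reduction to trees.} The claim that ``an undirected cycle in $H$ would contribute an uncancelled coefficient at a suitable edge set $F$'' is asserted without any indication of which $F$ or why the coefficient survives. The paper instead uses a short divisibility argument: compare the $\vec H$-count in a transitive tournament (a product of linear-extension counts) to its expected count in a random tournament; equality forces $2^{m_i}\mid n_i!$ for each component, which by Legendre's formula gives $m_i\le n_i-1$, hence each component is a tree (and $n_i$ a power of $2$). Your Fourier approach could in principle recover this, but you have not said how.

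\textbf{Extracting the bridge-mirrored structure.} You correctly flag this as the main obstacle, but your edge-swap identities do not visibly lead to a distinguished bridge or to a rooted isomorphism of the two halves; ``a careful choice of test tournaments'' is a hope, not a plan. The paper's route is quite different and worth knowing. It converts impartiality into the combinatorial criterion
\[
\sum_{\vec F'\in\Sub_F(\vec H)}\sgn(\vec F,\vec F')=0\quad\text{for every even }\vec F\text{ with }|F|=|H|,\ E(F)\neq\varnothing,
\]
and then proceeds in two decoupled steps. First, for the \emph{undirected} structure, one applies the criterion with $F=H_{\mathrm{rc}}$, where the ``recursive cutting'' operation repeatedly removes mirror-bridges from odd components until an even forest remains; a uniqueness lemma shows $|\Sub_{H_{\mathrm{rc}}}(H)|=1$, forcing $H_{\mathrm{rc}}$ to be edgeless, i.e.\ $H$ is recursively bridge-mirrored. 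Second, for the \emph{directed} structure, one compares $\vec H$ to a reference recursively bridge-mirrored orientation $\vec T$ of the same tree, encodes the discrepancy by signs $z_e=\pm1$, and shows that the criterion applied to $F=T_f:=(T\setminus f)_{\mathrm{rc}}$ for each non-bridge edge $f$ yields an upper-triangular linear system mod $2$ in the variables $w_e=z_e z_{\tau e}$ (with $\tau$ the odd automorphism), forcing $w_e=1$ for all $e$. The delicate point is proving that $T_{e_1}\cong T_{e_2}$ with $e_1\ne e_2$ implies $\tau e_1=e_2$, which needs a separate structural analysis of branches in recursively bridge-mirrored trees. None of this machinery is visible in your bilinear edge-swap identities, and I do not see how those identities alone would produce either the recursive-cutting idea or the triangular system.
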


\begin{remark}
It then follows that if $\vec H$ is impartial, then for \emph{every} $n$, all $n$-vertex tournaments $\vec K$ have the same number of copies of $\vec H$.
\end{remark}

\subsection*{Background and motivation.}
The first author learned of this problem from Jacob Fox, who had proposed it together with Hao Huang and Choongbum Lee around 2013 when they considered density problems for digraphs and tournaments, including variants of Sidorenko's conjecture and the inducibility problem, where one wishes to maximize or minimize the number of copies of a fixed digraph in a tournament. They formulated \cref{thm:main} as a conjecture after computer experiments, and proved that every impartial digraph must be a disjoint union of trees whose number of vertices is a power of 2; see \cref{prop:tree}. (The names ``impartial'' and ``recursively bridge-mirrored'' are ours.)

For undirected graphs, conjectures of Sidorenko~\cite{Sid93} and Erd\H{o}s--Simonovits~\cite{ES83} (commonly referred to as \emph{Sidorenko's conjecture}) say that for every bipartite graph $H$, the $H$-density in a graph of fixed density is minimized asymptotically by a random graph. Lately the conjecture has been proved for many families of $H$ \cite{CFS10,CKLL18,CL-blowup,CL17,Hat10,KLL16,LS,SzeSidorenko}, though the conjecture remains open in general. In particular, the case $H = K_{5,5}\setminus C_{10}$ is open.

Our main theorem can be viewed as an ``equality case'' of the directed analog of Sidorenko's conjecture, which turns out to be already quite intricate. In \cref{sec:further}, we propose several directions that are worth further investigation, including ``positive'' and ``negative'' digraphs, as well as generalizations to hypergraphs.

\subsection*{Outline}
Our proof of \cref{thm:main} proceeds in several steps:
\begin{itemize}
	\item By comparing the density of $\vec H$ in a transitive tournament and a random tournament, along with some integer divisibility considerations, $H$ is shown to be a forest (\cref{sec:forest}).
	\item Using a graph limit argument, we convert impartiality to a polynomial identity (\cref{sec:limit}), and equating the coefficients leads us to subgraph statistics of impartial graphs (\cref{sec:odd}).
	\item Then, we establish that the undirected structure of $\vec H$ is recursively bridge-mirrored (\cref{sec:undirected}). 
	\item Once we have the undirected structure, we establish the directed structure of $\vec H$ (\cref{sec:directed}), and this step requires some additional analysis of undirected recursively bridge-mirrored trees (\cref{sec:property}).
\end{itemize}

\section{Reduction to forests} \label{sec:forest}

We begin with a short argument due to Fox, Huang, and Lee (unpublished) that proves that the undirected structure of an impartial digraph is a forest.

By a \emph{component} of a digraph we mean a weakly connected component (so its meaning is compatible with the undirected structure).

\begin{proposition} \label{prop:tree}
Every component of an impartial digraph is a tree whose number of vertices is a power of $2$.
\end{proposition}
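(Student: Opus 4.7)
The plan is to compare the number of copies of $\vec H$ in the transitive tournament against the expected number in a uniformly random tournament, extract a clean numerical identity, and then apply $2$-adic divisibility componentwise.

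Let $h = |H|$ and $e = |E(\vec H)|$, and write $L(\vec H)$ for the number of bijections $\sigma\colon V(\vec H) \to [h]$ such that $\sigma(u) < \sigma(v)$ for every directed edge $u \to v$ of $\vec H$ (so $L(\vec H) = 0$ if $\vec H$ contains a directed cycle). A copy of $\vec H$ in the transitive tournament on $[n]$ amounts to choosing an $h$-subset together with a valid $\sigma$, so the count is $\binom{n}{h} L(\vec H)/|\Aut(\vec H)|$. The expected count in a uniformly random tournament on $n$ vertices is $n(n-1)\cdots(n-h+1)\cdot 2^{-e}/|\Aut(\vec H)|$, since each of the $e$ prescribed edges is correctly oriented independently with probability $1/2$. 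Impartiality forces the constant count on every $n$-vertex tournament to equal both of these, giving
$$L(\vec H) \cdot 2^e = h!.$$
In particular $\vec H$ is acyclic as a digraph, and $2^e$ divides $h!$.

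Next I would decompose $\vec H$ into its weakly connected components $\vec H_1, \ldots, \vec H_c$ with $|V(\vec H_i)| = h_i$ and $|E(\vec H_i)| = e_i$. Since a linear extension of $\vec H$ arises by interleaving linear extensions of the components, one has $L(\vec H) = \binom{h}{h_1, \ldots, h_c}\prod_i L(\vec H_i)$, and substitution into the boxed identity yields
$$\prod_{i=1}^{c} L(\vec H_i) \cdot 2^{\sum_i e_i} = \prod_{i=1}^{c} h_i!.$$
Applying Legendre's formula $v_2(m!) = m - s_2(m)$ (where $s_2$ denotes the binary digit sum) to the right-hand side gives $\sum_i e_i \le \sum_i (h_i - s_2(h_i)) = h - \sum_i s_2(h_i)$.

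Finally, weak connectivity of each $\vec H_i$ forces $e_i \ge h_i - 1$, hence $\sum_i e_i \ge h - c$. Combining the two inequalities gives $\sum_i s_2(h_i) \le c$, and because each $s_2(h_i) \ge 1$ every inequality above must be tight: each $s_2(h_i) = 1$ and each $e_i = h_i - 1$. The first condition says $h_i$ is a power of $2$, and the second, together with connectedness, says $\vec H_i$ is a tree. There is no real obstacle here beyond extracting the identity $L(\vec H)\cdot 2^e = h!$ cleanly from the definition of impartiality; the rest is bookkeeping with the $2$-adic valuation.
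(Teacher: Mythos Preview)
Your argument is correct and essentially identical to the paper's: both compare the transitive tournament count to the expected count in a uniform random tournament, obtain the identity $\prod_i L(\vec H_i)\cdot 2^{\sum_i e_i}=\prod_i h_i!$, and finish with the $2$-adic valuation of factorials. The only cosmetic differences are that you first derive $L(\vec H)\cdot 2^e=h!$ and then decompose (the paper decomposes into components at the outset), and you phrase the divisibility step via Legendre's formula $v_2(m!)=m-s_2(m)$ rather than $\sum_j\lfloor m/2^j\rfloor\le m-1$; the dividing by $|\Aut(\vec H)|$ is harmless since it cancels.
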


\begin{proof}
Let $\vec H$ be an impartial digraph, and $n\ge |H|$ such that every tournament on $n$ vertices has exactly the same number of copies of $\vec H$.

Suppose that $\vec H$ has $k$ components, and the $i$-th component has $n_i$ vertices, $m_i$ edges, and exactly $\ell_i$ linear extensions. The number of labeled copies of $\vec H$ in a transitive tournament on $n$ vertices is exactly
\[
\binom{n}{n_1, n_2, \dots, n_k, n-n_1-\cdots - n_k} \ell_1\cdots \ell_k.
\]
On the other hand, the expected number of labeled copies of $\vec H$ in a uniform random tournament on $n$ vertices is, by linearity of expectations,
\[
\binom{n}{n_1, n_2, \dots, n_k, n-n_1-\cdots - n_k} 2^{-m_1-\cdots -m_k} n_1! \cdots n_k!.
\]
Since $\vec H$ is impartial, the above two quantities are equal, and hence
\[
2^{m_1 + \cdots + m_k} \ell_1 \cdots \ell_k = n_1! \cdots n_k!.
\]
Since each component $H_i$ is connected, we have $m_i \ge n_i - 1$. On the other hand, the highest power of $2$ that divides $n_i!$ has exponent $\floor{n_i/2} + \floor{n_i/4} + \cdots \le n_i - 1$, with equality if and only if $n_i$ is a power of $2$. Since $2^{-m_1-\cdots -m_k} n_1! \cdots n_k! =  \ell_1 \cdots \ell_k $ is an integer, for every $i$, $n_i$ is a power of 2 and $m_i = n_i - 1$, so every component is a tree.
\end{proof}

\section{A graph limit argument} \label{sec:limit}

In this section, using tools from graph limit theory, we deduce some internal properties of an impartial digraph.

Adapting the notion of a \emph{graphon} from graph limit theory~\cite{Lov}, we define a \emph{tourneyon} to be a measurable function $W \colon [0,1]^2 \to [0,1]$ satisfying $W(x,y) + W(y,x) = 1$ for all $x,y \in [0,1]$. Just as graphons are the analytic limit objects for sequences of graphs, tourneyons are the limit objects for sequences of tournaments. Limits of tournaments have also appeared in \cite{Tho18}; also see \cite{DJ08,LS10} for related concepts.

We write $V(H)$ for the vertex set and $E(\vec H)$ the edge set of $\vec H$. Directed edges are given by ordered pairs $(u,v)$. 
 
We define the \emph{density} of a digraph $\vec H$ in $W$ to be
\[
t(\vec H, W) := \int_{[0,1]^{V(H)}} \prod_{(u,v) \in E(\vec H)} W(x_u, x_v) \, \prod_{v \in V(H)} dx_v.
\]

\begin{example}
$
t(
\begin{tikzpicture}[scale=.6]
	\node[v] (0) at (0,0) {};
	\node[v] (1) at (1,0) {};
	\node[v] (2) at (.5,.87) {};
	\draw[e] (0)--(2);
	\draw[e] (0)--(1);
	\draw[e] (1)--(2);
\end{tikzpicture},
W) = \int_{[0,1]^3} W(x,y)W(x,z)W(y,z) \, dxdydz.
$	
\end{example}

\begin{proposition} \label{prop:tourneyon}
A digraph $\vec H$ is impartial if and only if for all tourneyons $W$,
\[
	t(\vec H, W) = 2^{-|E(H)|}.
\]
\end{proposition}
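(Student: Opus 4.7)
My plan is to translate between tournaments and tourneyons via the standard graph-limit sampling construction, turning the combinatorial notion of impartiality into an analytic identity for $t(\vec H, W)$.

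For the forward direction, I would combine impartiality with \cref{prop:tree} to pin down the exact number of labeled copies of $\vec H$ in a tournament. Namely, if $\vec H$ is impartial then there exists $n_0$ such that for every tournament $\vec K$ on $n \ge n_0$ vertices, the number of labeled copies of $\vec H$ in $\vec K$ equals $(n)_{|V(H)|}\, 2^{-|E(H)|}$ (the common value is forced to equal the expected count in a uniform random tournament, as in the computation in the proof of \cref{prop:tree}). Given any tourneyon $W$, I would then sample a random tournament $\vec T$ on such an $n$ by drawing $x_1,\dots,x_n \in [0,1]$ i.i.d.\ uniformly and orienting each pair $(i,j)$ independently as $i\to j$ with probability $W(x_i,x_j)$. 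For any injective $\phi\colon V(H)\to[n]$, integrating out the randomness gives $\Pr[\phi \text{ is a homomorphism into } \vec T] = t(\vec H, W)$, so the expected number of labeled copies of $\vec H$ in $\vec T$ is $(n)_{|V(H)|}\, t(\vec H, W)$. Comparing with the almost-sure value $(n)_{|V(H)|}\, 2^{-|E(H)|}$ supplied by impartiality forces $t(\vec H, W) = 2^{-|E(H)|}$.

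For the reverse direction, I would specialize the hypothesis to step-function tourneyons coming from tournaments. Given a tournament $\vec K$ on $n$ vertices with equal-size partition $I_1,\dots,I_n$ of $[0,1]$, let $W_{\vec K}$ be the tourneyon equal to the indicator of $(i,j) \in E(\vec K)$ on each off-diagonal block $I_i \times I_j$ and equal to $1/2$ on each diagonal block $I_i \times I_i$. Expanding $t(\vec H, W_{\vec K})$ block-by-block and grouping maps $\phi\colon V(H)\to\{I_1,\dots,I_n\}$ by the partition $\pi$ of $V(H)$ they induce yields the exact identity
\[
n^{|V(H)|}\cdot 2^{-|E(H)|} \;=\; \sum_{\pi} 2^{-|\pi^{\circ}|}\cdot (\text{labeled copies of } \vec H/\pi \text{ in } \vec K),
\]
where $|\pi^{\circ}|$ counts $\vec H$-edges whose endpoints lie in the same part of $\pi$, and $\vec H/\pi$ is the simplified quotient digraph (loops removed; partitions producing opposite-direction parallel edges drop out entirely). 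The trivial partition contributes the labeled-copy count of $\vec H$ itself, while the remaining terms involve quotient digraphs with strictly fewer vertices.

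The main obstacle is upgrading this single identity (which only says a certain weighted sum of subgraph counts is $\vec K$-independent) into pointwise $\vec K$-independence of the labeled-copy count of $\vec H$ itself, which is what impartiality demands. My plan is to exploit the full strength of the hypothesis by not restricting attention to $W_{\vec K}$: replacing each diagonal block of $W_{\vec K}$ with an arbitrary sub-tourneyon $U_i$ on $[0,1]$ yields a whole family of identities, parameterized by the induced-subgraph densities $t(\vec H[B], U_i)$ on each part $B$. Equating coefficients in this family (or, equivalently, perturbing $W = 1/2 + \varepsilon U$ around the constant tourneyon with $U$ antisymmetric and reading off powers of $\varepsilon$) should isolate the labeled-copy count of $\vec H$ in $\vec K$ as a polynomial that is constant in the edge indicators of $\vec K$, which is exactly impartiality.
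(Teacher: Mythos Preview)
Your forward direction is correct and in fact more self-contained than the paper's: the paper appeals to the graph-limit machinery (approximating $W$ by tournaments in cut metric and invoking a counting lemma), whereas your sampling argument computes both the almost-sure and expected labeled-copy counts in a $W$-random tournament directly and equates them. This is a genuine simplification. (A minor point: you do not actually use \cref{prop:tree}, only the random-tournament expectation computed there.)

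Your reverse direction, however, is incomplete. You correctly isolate the obstacle: from the single tourneyon $W_{\vec K}$ one only learns that a certain \emph{weighted sum} over partitions $\pi$ of the labeled counts of $\vec H/\pi$ is constant in $\vec K$, not that the injective term alone is constant. Your proposed fix of varying the diagonal sub-tourneyons $U_i$ would eventually work, but it is a roundabout way of achieving what is done much more cleanly by varying the \emph{block sizes}. The paper takes the step tourneyon with blocks of lengths $a_1,\dots,a_n$ and off-diagonal values $(1+b_{ij})/2$; the hypothesis $t(\vec H,W)=2^{-|E(H)|}$ for all such $W$ then becomes a polynomial identity
\[
P_{\vec H,n}(\bm a;\bm b)\;:=\;\sum_{\pi:V(H)\to[n]}\prod_{v}a_{\pi(v)}\prod_{(u,v)\in E(\vec H)}(1+b_{\pi(u)\pi(v)})\;=\;(a_1+\cdots+a_n)^{|H|}
\]
in \emph{both} $\bm a$ and $\bm b$ (homogeneity in $\bm a$ removes the constraint $\sum a_i=1$). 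The point of the $a$-variables is precisely to separate injective from non-injective $\pi$: the square-free-in-$\bm a$ part of $P_{\vec H,n}$ records only injective maps, and evaluating its coefficients at $b_{ij}=\pm 1$ according to $\vec K$ gives the labeled copy count of $\vec H$ in $\vec K$, which is therefore independent of $\vec K$. Your ``$W=1/2+\varepsilon U$'' perturbation is the $\bm b$-variable half of this; without simultaneously varying the $a_i$ you still have to disentangle the non-injective contributions by some further argument, which you have not supplied.
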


Let us first prove the ``only if'' direction (the relevant direction for our main theorem) and defer the proof of the ``if'' direction until a bit later.

\begin{proof} (``Only if'' direction)
By standard arguments in graph limit theory \cite[Chapter 10]{Lov}, for each tourneyon $W$ there exists a sequence of $n$-vertex tournaments that approaches $W$ in the cut metric as $n$ grows to infinity, and thus by the counting lemma, they have $\vec H$-densities approaching $t(\vec H, W)$. Hence by impartiality, $t(\vec H, W)$ does not depend on the tourneyon $W$. 
The conclusion follows from noting that the constant tourneyon $W \equiv 1/2$ has $H$-density $2^{-|E(H)|}$.
\end{proof}

For every digraph $\vec H$ and positive integer $n$, define the following polynomial with variables $\bm a = (a_1, \dots, a_n)$ and $\bm b = (b_{ij})_{1 \le i < j \le n}$ associated to $\vec H$:
\begin{equation}\label{eq:poly-expand}
P_{\vec H,n} (\bm a; \bm b)
:=\sum_{\pi \colon V(H) \to [n]}
\prod_{v \in V(H)} a_{\pi(v)}
\prod_{(u,v) \in E(\vec H)} (1 + b_{\pi(u)\pi(v)}).
\end{equation}
where we set $b_{ij} = -b_{ji}$ if $i > j$ and $b_{ii} = 0$ for all $i$.

For $a_i \in [0,1]$ and $b_{ij} \in [-1,1]$ for all $i,j$ and satisfying $a_1+\cdots + a_n = 1$, the tourneyon $W$ obtained by partitioning $[0,1]$ into intervals $I_1, \dots, I_n$ of lengths $a_1, \dots, a_n$, and setting $W(x,y) = (1 + b_{ij})/2$ whenever $(x,y) \in I_i \times I_j$, satisfies
\begin{equation}\label{eq:associated-tourneyon}
t(\vec H, W) = 2^{-e(H)} P_{\vec H,n} (\bm a; \bm b).
\end{equation}

\begin{lemma} \label{lem:poly-tourney}
Let $\vec H$ be a digraph and $\vec K$ a tournament on $n$ vertices. Then the number of labeled copies of $\vec H$ in $\vec K$ equals the sum of the coefficients of all $a_{i_1}\cdots a_{i_{|H|}}$ with distinct $i_1, \dots, i_{|H|} \in [n]$ in the polynomial $2^{-e(H)} P_{\vec H,n}(\bm a; \bm b)$ evaluated at $b_{ij} = 1$ if $(i,j) \in E(\vec K)$ and $b_{ij} = -1$ if $(j,i) \in E(\vec K)$ for all $i<j$.
\end{lemma}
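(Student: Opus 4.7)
The plan is a direct calculation: expand $P_{\vec H,n}(\bm a;\bm b)$ according to its definition, isolate the contribution of squarefree monomials in $\bm a$, and then check that specializing $\bm b$ to the tournament $\vec K$ turns each edge factor into either $0$ or $2$.

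First I would observe that in the sum
\[
P_{\vec H,n}(\bm a;\bm b) = \sum_{\pi\colon V(H)\to[n]} \prod_{v\in V(H)} a_{\pi(v)}\,\prod_{(u,v)\in E(\vec H)}(1+b_{\pi(u)\pi(v)}),
\]
the monomial $\prod_v a_{\pi(v)}$ is squarefree (i.e.\ of the form $a_{i_1}\cdots a_{i_{|H|}}$ with the $i_j$ distinct) if and only if $\pi$ is injective, and moreover non-injective $\pi$ produce monomials in which some $a_i$ appears with multiplicity at least $2$. Since the second product involves only the $\bm b$ variables, collecting the coefficients of all squarefree $\bm a$-monomials shows that the sum referred to in the lemma is exactly
\[
\sum_{\pi\colon V(H)\hookrightarrow [n]}\ \prod_{(u,v)\in E(\vec H)}(1+b_{\pi(u)\pi(v)})
\]
(after multiplication by $2^{-e(H)}$), with the outer sum over injective $\pi$ only.

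Next I would carry out the specialization of $\bm b$. Using the conventions $b_{ii}=0$ and $b_{ij}=-b_{ji}$, a quick case check at both $i<j$ and $i>j$ gives the uniform statement: for every ordered pair $(i,j)$ with $i\neq j$, one has $b_{ij}=1$ if $(i,j)\in E(\vec K)$ and $b_{ij}=-1$ if $(j,i)\in E(\vec K)$. Because $\vec K$ is a tournament, exactly one of these alternatives occurs for each pair, so each factor $1+b_{\pi(u)\pi(v)}$ evaluates to $2$ when $(\pi(u),\pi(v))\in E(\vec K)$ and to $0$ otherwise. Hence for an injective $\pi$, the product $\prod_{(u,v)\in E(\vec H)}(1+b_{\pi(u)\pi(v)})$ equals $2^{e(H)}$ if $\pi$ is a (labeled) embedding of $\vec H$ into $\vec K$, and $0$ otherwise.

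Summing over injective $\pi$ thus produces $2^{e(H)}$ times the number of labeled copies of $\vec H$ in $\vec K$; multiplying by $2^{-e(H)}$ gives the desired identity. There is essentially no obstacle in this argument — it is a straightforward bookkeeping exercise — with the only mild subtlety being the verification that the antisymmetry convention $b_{ij}=-b_{ji}$ makes the specialization $b_{ij}=\pm 1$ agree with the orientation of the edge of $\vec K$ between $i$ and $j$ regardless of the order of $i$ and $j$.
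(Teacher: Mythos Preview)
Your proof is correct and follows exactly the same approach as the paper's: identify the square-free $\bm a$-monomials with injective $\pi$, and observe that under the given $\bm b$-specialization each edge factor $1+b_{\pi(u)\pi(v)}$ becomes $2$ or $0$ according to whether $\pi$ respects the orientation in $\vec K$. The paper compresses this into a single sentence, while you have spelled out the bookkeeping (including the antisymmetry check for $b_{ij}$) in full.
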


\begin{proof}
With the above $\bm b$-values, every edge-orientation respecting homomorphism $\pi \colon \vec H \to \vec K$ corresponds to a nonzero term on the right-hand side of \cref{eq:poly-expand}, and the map is injective if and only if the $\bm a$ factor is square-free.
\end{proof}

\begin{proposition}\label{prop:poly}
Let $\vec H$ be a digraph and $n \ge |H|$. Then $\vec H$ is impartial if and only if
\begin{equation}\label{eq:poly-id}
P_{\vec H,n}(\bm a; \bm b) = (a_1 + \cdots + a_n)^{|H|}.
\end{equation}
as a polynomial identity in the variables $\bm a$ and $\bm b$.
\end{proposition}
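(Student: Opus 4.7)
The plan is to leverage Proposition~\ref{prop:tourneyon} (the tourneyon characterization of impartiality) together with the connection between block tourneyons and the polynomial $P_{\vec H,n}$ set up right before Lemma~\ref{lem:poly-tourney}, and then to promote a pointwise identity on a probability simplex to a full polynomial identity via a homogeneity argument.

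For the ``only if'' direction, suppose $\vec H$ is impartial. By Proposition~\ref{prop:tourneyon}, $t(\vec H, W) = 2^{-|E(H)|}$ for every tourneyon $W$. I would apply this to the block tourneyon associated to $(\bm a, \bm b)$ with $a_i \ge 0$, $\sum_i a_i = 1$, and each $b_{ij} \in [-1,1]$ (antisymmetric), so that \cref{eq:associated-tourneyon} yields $P_{\vec H,n}(\bm a; \bm b) = 1$ on this region. To upgrade this to a polynomial identity, I would use that $P_{\vec H,n}$ is homogeneous of degree $|H|$ in $\bm a$, since each summand of \cref{eq:poly-expand} is a product of exactly $|H|$ of the $a_{\pi(v)}$'s. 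Given any $(\bm a, \bm b)$ with all $a_i > 0$ and $|b_{ij}| < 1$, let $s = a_1 + \cdots + a_n$; then $(\bm a/s, \bm b)$ is in the simplex region, so $P_{\vec H,n}(\bm a/s; \bm b) = 1$, and homogeneity gives $P_{\vec H,n}(\bm a; \bm b) = s^{|H|} = (a_1 + \cdots + a_n)^{|H|}$. Both sides of the desired identity are polynomials in $(\bm a, \bm b)$ that agree on this nonempty open subset of $\RR^n \times \RR^{\binom{n}{2}}$, hence agree as polynomials.

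For the ``if'' direction, assume the polynomial identity \cref{eq:poly-id} holds. By Lemma~\ref{lem:poly-tourney}, the number of labeled copies of $\vec H$ in any tournament $\vec K$ on $n$ vertices is obtained by substituting $b_{ij} = \pm 1$ (depending on $\vec K$) into $2^{-e(H)} P_{\vec H,n}(\bm a; \bm b)$ and summing the coefficients of the square-free monomials in $\bm a$. Since $(a_1 + \cdots + a_n)^{|H|}$ is independent of $\bm b$, the resulting count is independent of $\vec K$, proving impartiality. There is no serious obstacle here; the only subtlety is noticing that the pointwise identity on the probability simplex does not by itself yield a polynomial identity (since the simplex has empty interior in $\RR^n \times \RR^{\binom{n}{2}}$), which is exactly what the homogeneity step is used to remedy.
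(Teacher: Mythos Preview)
Your proposal is correct and follows essentially the same route as the paper: the ``only if'' direction uses Proposition~\ref{prop:tourneyon} on block tourneyons together with homogeneity in $\bm a$ to promote the simplex identity to a full polynomial identity, and the ``if'' direction uses Lemma~\ref{lem:poly-tourney} to see that the $\bm b$-independence forces equal copy counts. Your write-up just makes explicit the details (the open-set/homogeneity step) that the paper compresses into a single sentence.
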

\begin{proof}
If $\vec H$ is impartial, then by \cref{prop:tourneyon} and \cref{eq:associated-tourneyon}, one has \cref{eq:poly-id} for all $a_i \in [0,1]$ and $b_{ij} \in [-1,1]$ for all $i,j$ and satisfying $a_1+\cdots + a_n = 1$. Since \cref{eq:poly-id} is homomgenous in $\bm a$, it must be a polynomial identity.

Conversely, suppose \cref{eq:poly-id} holds. So $P_{\vec H,n}(\bm a; \bm b)$ does not depend on $\bm b$, and by \cref{lem:poly-tourney}, the number of copies of $\vec H$ is constant in all $n$-vertex tournaments. Hence $\vec H$ is impartial.
\end{proof}

\begin{proof}[Proof of the ``if'' direction of \cref{prop:tourneyon}]
	If $t(\vec H, W) = 2^{-|E(H)|}$ for all tourneyons, then setting $W$ as in \cref{eq:associated-tourneyon}, we see that $\vec H$ satisfies the polynomial identity \cref{eq:poly-id} for all $n$, and hence $\vec H$ is impartial by \cref{prop:poly}.
\end{proof}

Here is an immediate consequence of the algebraic characterization of impartiality.

\begin{proposition}
\label{prop:components}
A digraph is impartial if and only if all its weakly connected components are impartial.
\end{proposition}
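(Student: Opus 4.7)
The plan is to invoke the algebraic characterization from \cref{prop:poly}, which recasts impartiality of $\vec H$ as the polynomial identity $P_{\vec H,n}(\bm a; \bm b) = (a_1 + \cdots + a_n)^{|H|}$ for any $n \ge |H|$. The key preliminary observation is that $P_{\vec H,n}$ is multiplicative over disjoint unions: if $\vec H = \vec H_1 \sqcup \cdots \sqcup \vec H_k$, then a map $\pi \colon V(H) \to [n]$ is the same data as a tuple $(\pi_1, \dots, \pi_k)$ with $\pi_i \colon V(H_i) \to [n]$, and since edges lie entirely within components, the sum in \cref{eq:poly-expand} factors as
\[
P_{\vec H, n}(\bm a; \bm b) = \prod_{i=1}^k P_{\vec H_i, n}(\bm a; \bm b).
\]

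The ``if'' direction is then immediate: multiplying the component identities $P_{\vec H_i, n}(\bm a; \bm b) = (a_1 + \cdots + a_n)^{|H_i|}$ and using $\sum_i |H_i| = |H|$ yields the identity for $\vec H$.

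For the ``only if'' direction, assume $\vec H$ is impartial, so $\prod_i P_{\vec H_i, n}(\bm a; \bm b) = (a_1 + \cdots + a_n)^{|H|}$ holds in the UFD $\RR[\bm a, \bm b]$. Since $a_1 + \cdots + a_n$ is irreducible (a nonconstant linear polynomial) and the right-hand side contains no $\bm b$ variables at all, unique factorization forces each factor $P_{\vec H_i, n}$ to be a scalar multiple of a power of $a_1 + \cdots + a_n$, and in particular to be independent of $\bm b$. Homogeneity in $\bm a$ of degree $|H_i|$ then fixes the power to be $|H_i|$; evaluating at $\bm b = 0$ (where the polynomial simplifies via the multinomial theorem to $(a_1 + \cdots + a_n)^{|H_i|}$) fixes the scalar to be $1$. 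So each $\vec H_i$ satisfies the required polynomial identity and is impartial by \cref{prop:poly}.

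The main subtlety is the UFD step: a priori, the factors $P_{\vec H_i, n}$ are only guaranteed to be homogeneous in $\bm a$, and could conceivably have $\bm b$-dependent irreducible components that cancel pairwise within the product. The absence of any $\bm b$ variables on the right-hand side is precisely what rules this out, and this is the crux of the argument.
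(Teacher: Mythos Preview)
Your proof is correct and follows essentially the same route as the paper: both establish the multiplicativity $P_{\vec H,n} = \prod_i P_{\vec H_i,n}$ and then invoke unique factorization in the polynomial ring together with the homogeneity in $\bm a$ to split the identity componentwise. The only cosmetic differences are that the paper dispatches the ``if'' direction directly from the definition of impartiality rather than via \cref{prop:poly}, and derives the multiplicativity from $t(\vec H, W) = \prod_i t(\vec H_i, W)$ rather than from \cref{eq:poly-expand}; your explicit evaluation at $\bm b = 0$ to pin down the scalar is a clean way to do what the paper calls ``checking constant factors.''
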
 

\begin{proof}
The ``if'' direction is clear from definition (by embedding one component at a time).
For the ``only if'' direction, let $\vec H$ be an impartial digraph with components $\vec H_1, \dots, \vec H_\ell$.
Since $t(\vec H, W) = t(\vec H_1, W) \cdots t(\vec H_\ell, W)$ for every tourneyon $W$, one has $P_{\vec H,n} = P_{\vec H_1,n} \cdots P_{\vec H_\ell,n}$. In particular, since $\vec H$ is impartial, by \cref{prop:poly} one has the polynomial identity with $n = |H|$
\[
\prod_{j=1}^\ell P_{\vec H_j,n}(\bm a; \bm b) 
= (a_1 + \cdots + a_n)^{|H_1| + \cdots + |H_\ell|}.
\]
By unique factorization of polynomials, along with noting the degree of $\bm a$ and checking constant factors, we have
\[
P_{\vec H_j,n}(\bm a; \bm b) 
= (a_1 + \cdots + a_n)^{|H_j|}
\]
for each $j=1,\dots, \ell$, and thus by \cref{prop:poly} again, every $\vec H_j$ is impartial.
\end{proof}

\section{Odd automorphisms and odd graphs} \label{sec:odd}

In this section we derive some subgraph statistics of impartial digraphs from the polynomial characterization in the previous section.

Given digraphs $\vec F$ and $\vec H$ along with an undirected graph homomorphism $\sigma \colon F \to H$ (i.e., a map $V(F) \to V(H)$ that carries every edge of $F$ to an edge of $H$), we define $\sgn(\sigma; \vec F, \vec H) = (-1)^r$ where $r = |\{(u,v) \in E(\vec F) : (\sigma(v), \sigma(u)) \in E(\vec H)\}|$ is the number of edges of $\vec F$ whose orientation is reversed under the map $\sigma$.

Let $\Aut(H)$ be the group of automorphisms of an undirected graph $H$. For $\sigma \in \Aut(H)$, write $\sgn(\sigma) = \sgn(\sigma; \vec H, \vec H)$ where $\vec H$ is an arbitrary orientation of the edges of $H$. Note that $\sgn(\sigma)$ does not depend on this choice of edge orientations, since reversing the orientation of a single edge of $\vec H$ does not change the sign. We say that the automorphism $\sigma$ of $H$ is \emph{odd} if $\sgn(\sigma) = -1$ and \emph{even} if $\sgn(\sigma) = 1$. Note that $\sgn \colon \Aut(H) \to \{1, -1\}$ is a group homomorphism.

We call a graph or a digraph \emph{odd} if its underlying undirected graph has an odd automorphism, and \emph{even} if it has no odd automorphism. The classification of graphs as even or odd plays an important role in our arguments. Note a graph is odd if and only if it has an odd component.

\begin{example}
The horizontal reflection (i.e., across a vertical axis) of the following odd graph gives an odd automorphism, while the vertical reflection gives an even automorphism.
\[
\begin{tikzpicture}
\node[v] (0) at (2,0.5) {};	
\node[v] (1) at (1,1) {};	
\node[v] (2) at (0,1) {};	
\node[v] (3) at (1,0) {};	
\node[v] (4) at (0,0) {};	
\node[v] (5) at (3,0.5) {};	
\node[v] (6) at (4,1) {};	
\node[v] (7) at (5,1) {};	
\node[v] (8) at (4,0) {};	
\node[v] (9) at (5,0) {};	
\draw (0)--(1)--(2) (0)--(3)--(4) (0)--(5)--(6)--(7) (5)--(8)--(9);
\end{tikzpicture}
\]
\end{example}

Given a pair of even digraphs $\vec H_1$ and $\vec H_2$ with $H_1 \cong H_2$ (meaning that they have isomorphic undirected structures), define $\sgn(\vec H_1, \vec H_2) := \sgn(\sigma; \vec H_1, \vec H_2)$ where $\sigma \colon  H_1 \to H_2$ is some isomorphism of the undirected structures. Since $H_1 \cong H_2$ is even,  $\sgn(\vec H_1, \vec H_2)$ does not depend on the choice of the isomorphism $\sigma$. We do not define $\sgn(\vec H_1, \vec H_2)$ unless both digraphs are even and have isomorphic undirected structures.

\begin{example}
	$\sgn(
	\begin{tikzpicture}[scale=.7]
		\node[v] (0) at (0,0) {};
		\node[v] (1) at (1,0) {};
		\node[v] (2) at (2,0) {};
		\draw[e] (0)--(1);
		\draw[e] (1)--(2);
	\end{tikzpicture}
	,
	\begin{tikzpicture}[scale=.7]
		\node[v] (0) at (0,0) {};
		\node[v] (1) at (1,0) {};
		\node[v] (2) at (2,0) {};
		\draw[e] (0)--(1);
		\draw[e] (2)--(1);
	\end{tikzpicture}
	) = -1
	$ and 
	$\sgn(
	\begin{tikzpicture}[scale=.7]
		\node[v] (0) at (0,0) {};
		\node[v] (1) at (1,0) {};
		\node[v] (2) at (2,0) {};
		\draw[e] (0)--(1);
		\draw[e] (1)--(2);
	\end{tikzpicture}
	,
	\begin{tikzpicture}[scale=.7]
		\node[v] (0) at (0,0) {};
		\node[v] (1) at (1,0) {};
		\node[v] (2) at (2,0) {};
		\draw[e] (1)--(0);
		\draw[e] (2)--(1);
	\end{tikzpicture}
	) = 1
	$
\end{example}

Given undirected graphs $F$ and $H$, we denote the set of subgraphs of $H$ isomorphic to $F$ by
\[
\Sub_F(H) := \{ F' \subseteq H : F' \cong F\}.
\]
These are unlabeled copies of $F$ in $H$. To be precise, $F' \subseteq H$ means that $V(F') \subseteq V(H)$ and $E(F') \subseteq E(H)$. In our applications, $|F| = |H|$, so that we are considering the collection of edge-subsets of $H$ that are isomorphic to $F$.

Given a digraph $\vec H$, we write 
\[
\Sub_F(\vec H) := \{ \vec F' \subseteq \vec H: F' \cong F\}.
\]
to denote the set of subgraphs of $\vec H$ (with the inherited edge-orientations) whose undirected structure is isomorphic to $F$.

\begin{example}\label{ex:sub}
Consider
\[
\vec H = 
\begin{tikzpicture}
\node[v] (0) at (0,0) {};
\node[v] (1) at (1,0) {};
\node[v] (2) at (2,0) {};
\node[v] (3) at (3,0) {};
\node[v] (4) at (0,1) {};
\node[v] (5) at (1,1) {};
\node[v] (6) at (2,1) {};
\node[v] (7) at (3,1) {};
\draw[e] (0)--(4);
\draw[e] (1)--(5);
\draw[e] (2)--(6);
\draw[e] (7)--(3);
\draw[e] (4)--(5);
\draw[e] (5)--(6);
\draw[e] (6)--(7);
\end{tikzpicture}
\qquad \text{and} \qquad
F = 
\begin{tikzpicture}
\node[v] (0) at (0,0) {};
\node[v] (1) at (1,0) {};
\node[v] (2) at (2,0) {};
\node[v] (3) at (3,0) {};
\node[v] (4) at (0,1) {};
\node[v] (5) at (1,1) {};
\node[v] (6) at (2,1) {};
\node[v] (7) at (3,1) {};
\draw (0)--(4);
\draw (1)--(5);
\draw (4)--(5);
\draw (5)--(6);
\end{tikzpicture}
\]
Then $\Sub_{F}(\vec H)$ has cardinality 4 and consists of (we draw the vertex set of the subgraph in the same way as $\vec H$ above)
\[
\begin{tikzpicture}
\node[v] (0) at (0,0) {};
\node[v] (1) at (1,0) {};
\node[v] (2) at (2,0) {};
\node[v] (3) at (3,0) {};
\node[v] (4) at (0,1) {};
\node[v] (5) at (1,1) {};
\node[v] (6) at (2,1) {};
\node[v] (7) at (3,1) {};
\draw[e] (0)--(4);
\draw[e] (1)--(5);
\draw[e] (4)--(5);
\draw[e] (5)--(6);
\end{tikzpicture}
\hspace{4em}
\begin{tikzpicture}
\node[v] (0) at (0,0) {};
\node[v] (1) at (1,0) {};
\node[v] (2) at (2,0) {};
\node[v] (3) at (3,0) {};
\node[v] (4) at (0,1) {};
\node[v] (5) at (1,1) {};
\node[v] (6) at (2,1) {};
\node[v] (7) at (3,1) {};
\draw[e] (1)--(5);
\draw[e] (2)--(6);
\draw[e] (4)--(5);
\draw[e] (5)--(6);
\end{tikzpicture}
\hspace{4em}
\begin{tikzpicture}
\node[v] (0) at (0,0) {};
\node[v] (1) at (1,0) {};
\node[v] (2) at (2,0) {};
\node[v] (3) at (3,0) {};
\node[v] (4) at (0,1) {};
\node[v] (5) at (1,1) {};
\node[v] (6) at (2,1) {};
\node[v] (7) at (3,1) {};
\draw[e] (1)--(5);
\draw[e] (2)--(6);
\draw[e] (5)--(6);
\draw[e] (6)--(7);
\end{tikzpicture}
\hspace{4em}
\begin{tikzpicture}
\node[v] (0) at (0,0) {};
\node[v] (1) at (1,0) {};
\node[v] (2) at (2,0) {};
\node[v] (3) at (3,0) {};
\node[v] (4) at (0,1) {};
\node[v] (5) at (1,1) {};
\node[v] (6) at (2,1) {};
\node[v] (7) at (3,1) {};
\draw[e] (2)--(6);
\draw[e] (7)--(3);
\draw[e] (5)--(6);
\draw[e] (6)--(7);
\end{tikzpicture}
\]
\end{example}

For vectors $\bm a = (a_1, \dots, a_n)$ and $\bm b = (b_{ij})_{i<j}$ (setting $b_{ii} = 0$ and $b_{ji} = -b_{ij}$), we write
\begin{equation}\label{eq:ab}
a_{F} := \prod_{i \in V(F)} a_i \quad \text{and} \quad b_{\vec F} :=\prod_{(i,j) \in E(\vec F)} b_{ij}.
\end{equation}
Recall the polynomial $P_{\vec H ,n}(\bm a; \bm b)$ from \cref{eq:poly-expand} and the relation $b_{ij} = - b_{ji}$. Expanding, we obtain
\[
P_{\vec H,n} (\bm a; \bm b)
= \sum_{\pi \colon V(H) \to [n]} \sum_{\substack{\vec F \subseteq \vec H: \\ V(F) = V(H)}} 
\prod_{v \in V(H)} a_{\pi(v)}
\prod_{(u,v) \in E(\vec F)} b_{\pi(u)\pi(v)}.
\]

\begin{lemma} \label{lem:poly-coeff-sum}
Let $\vec H$ and $\vec F$ be digraphs both with vertex set $[k]$. The coefficient of $a_F b_{\vec F}$ in $P_{\vec H ,n}(\bm a; \bm b)$ with $n \ge k$ is zero if $F$ is odd, and
\[
|\Aut F| \sum_{\vec F' \in \Sub_F(\vec H)} \sgn(\vec F, \vec F')
\]
if $F$ is even.
\end{lemma}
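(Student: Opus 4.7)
Starting from the defining expansion
\[
P_{\vec H,n}(\bm a;\bm b) = \sum_{\pi\colon V(H)\to [n]} \sum_{S \subseteq E(\vec H)} \prod_{v \in V(H)} a_{\pi(v)} \prod_{(u,v) \in S} b_{\pi(u)\pi(v)},
\]
the plan is to extract the coefficient of $a_F b_{\vec F}$ by identifying which pairs $(\pi,S)$ contribute. For the $a$-factor, $\pi$ must be a bijection onto $[k]$, so $\pi$ is a permutation of $[k]$. For the $b$-factor, using the antisymmetry $b_{ji}=-b_{ij}$, the monomial $\prod_{(u,v)\in S} b_{\pi(u)\pi(v)}$ equals $\pm b_{\vec F}$ precisely when the undirected image of $S$ under $\pi$ coincides with $E(F)$ as edge sets of $K_{[k]}$.

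I then re-parameterize the contributing pairs $(\pi, S)$ as pairs $(\vec F', \sigma)$, where $\vec F' := ([k], S) \in \Sub_F(\vec H)$ and $\sigma := \pi^{-1}\colon F \to F'$ is an undirected isomorphism. Tracking the signs that arise from reordering the $b$-variables shows that the contribution of this pair is $\sgn(\sigma;\vec F,\vec F')$ times $a_F b_{\vec F}$. For each fixed $\vec F' \in \Sub_F(\vec H)$ the admissible isomorphisms $\sigma$ form a coset of $\Aut F$ in the set of undirected isomorphisms $F \to F'$; after fixing a base isomorphism $\sigma_0$, they have the form $\sigma_0 \circ \tau$ with $\tau \in \Aut F$, giving $|\Aut F|$ isomorphisms in total.

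The key algebraic ingredient is a composition rule: for any intermediate orientation $\vec F_2$ of $F_2$,
\[
\sgn(\sigma_2 \circ \sigma_1;\vec F_1,\vec F_3) = \sgn(\sigma_1;\vec F_1,\vec F_2)\,\sgn(\sigma_2;\vec F_2,\vec F_3).
\]
I would verify this by a per-edge check, splitting on the two possible orientations of each edge in $\vec F_2$: in either case the two sign factors combine into the correct sign contributed for that edge by $\sgn(\sigma_2\sigma_1;\vec F_1,\vec F_3)$, so the intermediate choice cancels out. Applied with intermediate copy $\vec F$, this gives $\sgn(\sigma_0\tau;\vec F,\vec F') = \sgn(\tau)\,\sgn(\sigma_0;\vec F,\vec F')$, so the contribution from a fixed $\vec F'$ to the coefficient equals $\sgn(\sigma_0;\vec F,\vec F') \sum_{\tau \in \Aut F}\sgn(\tau)$.

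Finally, the standard group-theoretic argument---multiplication by a fixed odd automorphism is a sign-flipping bijection of $\Aut F$---shows $\sum_{\tau \in \Aut F}\sgn(\tau) = 0$ when $F$ is odd, giving coefficient $0$. When $F$ is even the sum equals $|\Aut F|$, and then $\sgn(\sigma_0;\vec F,\vec F')$ is independent of the choice $\sigma_0$ and coincides with $\sgn(\vec F,\vec F')$; summing over $\vec F' \in \Sub_F(\vec H)$ yields $|\Aut F|\sum_{\vec F' \in \Sub_F(\vec H)}\sgn(\vec F,\vec F')$, as claimed. The main point to get right throughout is the sign bookkeeping, most notably establishing the composition rule for $\sgn$ and verifying that the $\pm 1$ arising from antisymmetry of $b$ is faithfully identified with $\sgn(\sigma;\vec F,\vec F')$; once these are pinned down, the group-theoretic vanishing does the rest of the work.
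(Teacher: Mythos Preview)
Your proof is correct and follows essentially the same approach as the paper's: expand the polynomial, identify the contributing bijections, and partition the resulting sum by cosets of $\Aut F$. The paper compresses this into two sentences (summing $\sgn(\pi;\vec F,\vec H)$ over bijections $\pi\colon V(F)\to V(H)$ that are homomorphisms $F\to H$, then partitioning by $\Aut F$-cosets), while you spell out the intermediate steps---the subset $S$, the re-parameterization by $(\vec F',\sigma)$, and the composition rule for $\sgn$---but the underlying argument is the same.
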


\begin{proof}
The coefficient of $a_F b_{\vec F}$ in  $P_{\vec H,n}(\bm a; \bm b)$ equals to the sum of $\sgn(\pi; \vec F, \vec H)$ over all bijections $\pi \colon V(F) \to V(H)$ that induce homomorphisms $F \to H$. We can then partition this sum by orbits of the automorphisms group of $F$, and the conclusion then follows.
\end{proof}

\begin{example}
	Continuing \cref{ex:sub}, letting $\vec F$ denote the orientation of $F$ inherited from $\vec H$, we find that $\sgn(\vec F, \vec F')$ for the four listed elements $\vec F' \in \Sub_F(\vec H)$ are $1, 1, 1, -1$.
\end{example}

The following subgraph statistics characterization of impartiality is the main property that we need in the rest of the proof of the main theorem.

\begin{proposition} \label{prop:sgn-sum}
A digraph $\vec H$ is impartial if and only if for every even digraph $\vec F$ on $|H|$ vertices with at least one edge,
\begin{equation}\label{eq:sgn-sum}
\sum_{\vec F' \in \Sub_F(\vec H)} \sgn(\vec F, \vec F') = 0.
\end{equation}
\end{proposition}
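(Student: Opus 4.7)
My plan is to prove both directions by combining Lemma~\ref{lem:poly-coeff-sum} with earlier results: Proposition~\ref{prop:poly} for the forward direction and Lemma~\ref{lem:poly-tourney} for the reverse. For the forward implication, I would assume $\vec H$ is impartial and invoke Proposition~\ref{prop:poly} to obtain the polynomial identity $P_{\vec H,n}(\bm a;\bm b) = (a_1+\cdots+a_n)^{|H|}$ at (for instance) $n=|H|$. Since the right-hand side has no $\bm b$-dependence, the coefficient of $a_F b_{\vec F}$ in $P_{\vec H,n}$ must vanish whenever $\vec F$ has at least one edge. When $F$ is even, Lemma~\ref{lem:poly-coeff-sum} identifies this coefficient with $|\Aut F|\sum_{\vec F'\in\Sub_F(\vec H)}\sgn(\vec F,\vec F')$, so the sign-sum in \eqref{eq:sgn-sum} is zero, as required.

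For the reverse implication, set $k=|H|$ and assume the sign-sum in \eqref{eq:sgn-sum} vanishes for every even digraph $\vec F$ on $k$ vertices with at least one edge. By Lemma~\ref{lem:poly-coeff-sum}, the coefficient of $a_F b_{\vec F}$ in $P_{\vec H,k}(\bm a;\bm b)$ is then zero for every $\vec F$ on $[k]$ with at least one edge: automatically when $F$ is odd, and by hypothesis when $F$ is even. I would next view the coefficient of $a_1 a_2\cdots a_k$ in $P_{\vec H,k}$ as a polynomial in $\bm b$. Only bijective $\pi\colon V(H)\to[k]$ contribute to this coefficient, and in $\prod_{(u,v)\in E(\vec H)}(1+b_{\pi(u)\pi(v)})$ each variable $b_{ij}$ can appear at most twice (once for each orientation of an edge of $\vec H$ whose endpoints are sent by $\pi$ to $\{i,j\}$). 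Hence every nontrivial $\bm b$-monomial occurring in this coefficient is $\pm b_{\vec F}$ for some (simple) digraph $\vec F$ on $[k]$ with at least one edge, all of whose coefficients we have just shown to vanish. The remaining constant term is the contribution from the empty $\vec F$, which equals $k!$. By Lemma~\ref{lem:poly-tourney}, every $k$-vertex tournament therefore contains exactly $2^{-e(H)}k!$ labeled copies of $\vec H$, establishing impartiality.

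I expect the main obstacle to be the bookkeeping step in the reverse direction: explicitly verifying that every $\bm b$-monomial appearing in the coefficient of $a_1\cdots a_{|H|}$ in $P_{\vec H,|H|}$ is captured by some $b_{\vec F}$ with $\vec F$ a simple digraph on $[|H|]$, so that Lemma~\ref{lem:poly-coeff-sum} truly controls all of them. With this coverage observation in place, both directions become a direct application of Lemma~\ref{lem:poly-coeff-sum} together with Proposition~\ref{prop:poly} and Lemma~\ref{lem:poly-tourney}.
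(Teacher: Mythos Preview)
Your proposal is correct and matches the paper's approach: both directions combine Lemma~\ref{lem:poly-coeff-sum} with Proposition~\ref{prop:poly} (forward) and Lemma~\ref{lem:poly-tourney} (reverse), and your explicit check that every $\bm b$-monomial in the coefficient of $a_1\cdots a_{|H|}$ has the form $b_{\vec F}$ is exactly what the paper compresses into the phrase ``the polynomial identity holds term-by-term for all terms that are square-free in $\bm a$.'' One cosmetic remark: since the digraphs in this paper have at most one arc between any pair of vertices, each $b_{ij}$ actually occurs at most once (not twice) in $\prod_{(u,v)\in E(\vec H)}(1+b_{\pi(u)\pi(v)})$ for bijective $\pi$, so the coverage observation you flag as the main obstacle is immediate.
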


\begin{proof}
Suppose $\vec H$ is impartial.
By \cref{eq:poly-id}, the coefficient of $a_F b_{\vec F}$ in $P_{\vec H}$ is zero provided that $F$ has at least one edge, and by \cref{lem:poly-coeff-sum} we obtain \cref{eq:sgn-sum}.

Conversely, suppose \cref{eq:sgn-sum} holds for all even $\vec F$ with at least one edge, so that the coefficient of $a_F b_{\vec F}$ in $P_{\vec H}$ is zero by \cref{lem:poly-coeff-sum}.  It follows that the polynomial identity \cref{eq:poly-id} holds term-by-term for all terms that are square-free in $\bm a$, and thus by \cref{lem:poly-tourney}, $\vec H$ has the same number of copies in all $n$-vertex tournaments, and hence $\vec H$ is impartial.
\end{proof}

\section{Undirected structure} \label{sec:undirected}

The main goal of this section is to establish the undirected structure of an impartial digraph.

\begin{proposition} \label{prop:undirected}
The undirected structure of every weakly connected impartial digraph is a recursively bridge-mirrored tree.
\end{proposition}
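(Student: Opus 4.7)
I would proceed by strong induction on $|H|$, which is a power of $2$ by \cref{prop:tree}; the base case $|H| = 1$ is trivial. So let $\vec H$ be weakly connected impartial with $|H| = 2^k \ge 2$, so that $H$ is a tree.

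First, applying \cref{prop:sgn-sum} with $\vec F := \vec H$ forces $H$ to be odd. Indeed, $\Sub_H(\vec H) = \{\vec H\}$, since any $(|H|-1)$-edge subgraph of the tree $H$ isomorphic to $H$ must equal $H$ itself (it is spanning with the full edge count). If $H$ were even, the signed sum would equal $\sgn(\vec H, \vec H) = 1 \neq 0$, contradicting \cref{prop:sgn-sum}.

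Next, I would pin down a central bridge. A direct verification shows that for a tree automorphism $\sigma$, $\sgn(\sigma) = (-1)^{s(\sigma)}$, where $s(\sigma)$ counts edges of $H$ whose two endpoints are interchanged by $\sigma$ (the remaining $\sigma$-orbits on $E(H)$ contribute an even amount to the reversal count). Since tree automorphisms are isometries, no $\sigma$ fixing a vertex $v$ can swap an adjacent pair $\{a,b\}$: the paths from $v$ to $a$ and $v$ to $b$ would then have equal length and close a cycle together with $\{a,b\}$. Hence automorphisms that fix a vertex are even, and odd automorphisms fix no vertex. Because every automorphism preserves the tree's center, the center of $H$ cannot be a vertex; it must be an edge $e = \{u_1,u_2\}$, and every odd $\sigma$ swaps $u_1 \leftrightarrow u_2$. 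Such a $\sigma$ then carries the component $T_1$ of $H - e$ (containing $u_1$) isomorphically onto the component $T_2$ (containing $u_2$), since every internal $T_1$-path avoids $e$ and maps to a $T_2$-path avoiding $e$. Therefore $T_1 \cong T_2$ as trees rooted at $u_1, u_2$, each on $2^{k-1}$ vertices.

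The remaining step is to show that each $\vec T_i$ is itself impartial---once this is in hand, the inductive hypothesis gives recursively bridge-mirrored structure on the halves, and combining them via the bridge $e$ exhibits $H$ as recursively bridge-mirrored by \cref{def:rbm}. My plan is to exploit the polynomial identity $P_{\vec H, n}(\bm a; \bm b) = (a_1 + \cdots + a_n)^n$ from \cref{prop:poly}. Splitting $\pi \colon V(H) \to [n]$ into $\pi_i := \pi|_{V(T_i)}$ yields a decomposition
\[
P_{\vec H, n}(\bm a; \bm b) = P_{\vec T_1, n}(\bm a; \bm b)\, P_{\vec T_2, n}(\bm a; \bm b) + R(\bm a; \bm b),
\]
where the remainder $R$ collects the extra factor $b_{\pi_1(u_1)\pi_2(u_2)}$ contributed by the bridge. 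Matching against $(a_1 + \cdots + a_n)^n = (a_1 + \cdots + a_n)^{|T_1|}(a_1 + \cdots + a_n)^{|T_2|}$ and leveraging the uniqueness of $e$ from Step 2, one aims to separately extract $P_{\vec T_i, n} = (a_1 + \cdots + a_n)^{|T_i|}$, equivalently the impartiality of $\vec T_i$ via \cref{prop:poly}. Equivalently, applying \cref{prop:sgn-sum} to $\vec H$ with test digraphs $\vec F = \vec F_1 \sqcup \vec T_2$ for each even $\vec F_1$ on $V(T_1)$ and using that the central bridge $e$ is forced, one should be able to reduce the signed sum for $\vec F$ in $\vec H$ to the signed sum for $\vec F_1$ in $\vec T_1$, which therefore vanishes.

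The main obstacle is this last step: the cross term $R$ intertwines with $P_{\vec T_1} P_{\vec T_2}$ in nontrivial ways, and when $\vec F_1$ happens to share undirected structure with $\vec T_2$ the test digraph $\vec F$ picks up additional swap-automorphisms whose signs must be carefully tracked. Carrying out this disentanglement---whether by a degree-by-degree analysis of the polynomial identity or by a refined application of \cref{prop:sgn-sum} to a well-chosen family of test digraphs---is the technical heart of the proof.
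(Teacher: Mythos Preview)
Your first two steps are sound: applying \cref{prop:sgn-sum} with $\vec F = \vec H$ forces $H$ to be odd, and a center-of-tree argument then produces a mirror-bridge $e$ whose deletion leaves two isomorphic halves $T_1 \cong T_2$. (A small quibble: you only need the weaker fact that any automorphism fixing a vertex is even, which follows immediately by orienting all edges away from that vertex; the stronger claim $\sgn(\sigma)=(-1)^{s(\sigma)}$ is not needed and would take more care to justify.) This is essentially the content of \cref{lem:mirror-bridge}.

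The genuine gap is Step~3, which you yourself flag. You want to show that each $\vec T_i$ is impartial so that induction applies. But impartiality of $\vec T_i$ is \emph{not} a formal consequence of impartiality of $\vec H$ and the mere existence of the bridge: it requires knowing that the odd automorphism $\tau$ preserves all edge-orientations of $\vec H$ except on $e$ (so that every copy of $\vec T_1 \sqcup \vec T_2$ in a tournament extends uniquely to a copy of $\vec H$). This is precisely \cref{lem:top-directed}, which the paper proves only \emph{after} the undirected structure is established and whose proof occupies \cref{sec:directed,sec:property}. Your polynomial decomposition $P_{\vec H,n} = P_{\vec T_1,n} P_{\vec T_2,n} + R$ does not isolate $P_{\vec T_i,n}$ without exactly this orientation-compatibility, and your test-digraph idea $\vec F = \vec F_1 \sqcup \vec T_2$ likewise needs to control how copies of $T_2$ embed in $H$ away from the obvious one, which again brings in the full directed structure. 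So the induction, as outlined, is circular.

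The paper sidesteps this entirely. Instead of peeling off one mirror-bridge and recursing, it performs the \emph{entire} recursive cutting at once: define $H_\rc$ by repeatedly removing mirror-bridges from odd components until all components are even. A short combinatorial lemma (\cref{lem:unique-cut}) shows that any spanning subgraph of a tree whose components all have the same size is unique up to isomorphism; since the components of $H_\rc$ are pairwise isomorphic (an easy induction, as the two halves across each mirror-bridge are isomorphic), $|\Sub_{H_\rc}(H)| = 1$. If $H_\rc$ had an edge, \cref{prop:sgn-sum} with $\vec F = \vec H_\rc$ would then give $\pm 1 = 0$, a contradiction. Hence $H_\rc$ is edgeless, i.e., $H$ is recursively bridge-mirrored. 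No induction on $|H|$, and no need to know anything about edge-orientations. The key difference from your approach is that the paper chooses $\vec F = \vec H_\rc$ rather than $\vec F = \vec H$: your choice proves only one level of the recursion, while the paper's choice proves all levels simultaneously.
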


We saw from \cref{prop:tree} that it suffices to consider trees. In this section we focus on undirected structure, and by default ``tree'' means undirected tree.

\begin{definition}[Mirror-bridge]
An edge $f$ of a tree $T$ is called a \emph{mirror-bridge} if $T$ has an automorphism that swaps the two endpoints of $f$. In this case, we call the two isomorphic components of $T\setminus f$ ($T$ with edge $f$ removed but all vertices kept) \emph{half-branches} of $T$, 
\end{definition}

\begin{example}
The mirror-bridge of the first tree below is highlighted, while the second tree does not have a mirror-bridge.
\[
\begin{tikzpicture}
\node[v] (0) at (2,0.5) {};	
\node[v] (1) at (1,1) {};	
\node[v] (2) at (0,1) {};	
\node[v] (3) at (1,0) {};	
\node[v] (4) at (0,0) {};	
\node[v] (5) at (3,0.5) {};	
\node[v] (6) at (4,1) {};	
\node[v] (7) at (5,1) {};	
\node[v] (8) at (4,0) {};	
\node[v] (9) at (5,0) {};	
\draw (0)--(1)--(2) (0)--(3)--(4) (5)--(6)--(7) (5)--(8)--(9);
\draw[b] (0)--(5);
\end{tikzpicture}
\hspace{3cm}
\begin{tikzpicture}
\node[v] (0) at (2,0.5) {};	
\node[v] (1) at (1,1) {};	
\node[v] (2) at (0,1) {};	
\node[v] (3) at (1,0) {};	
\node[v] (4) at (0,0) {};	
\node[v] (5) at (3,0.5) {};	
\node[v] (5a) at (4,0.5) {};	
\node[v] (6) at (5,1) {};	
\node[v] (7) at (6,1) {};	
\node[v] (8) at (5,0) {};	
\node[v] (9) at (6,0) {};	
\draw (0)--(1)--(2) (0)--(3)--(4) (0)--(5)--(5a)--(6)--(7) (5a)--(8)--(9);
\end{tikzpicture}
\]
This is a half-branch of the first tree:
\[
\begin{tikzpicture}
\node[v] (0) at (2,0.5) {};	
\node[v] (1) at (1,1) {};	
\node[v] (2) at (0,1) {};	
\node[v] (3) at (1,0) {};	
\node[v] (4) at (0,0) {};	
\draw (0)--(1)--(2) (0)--(3)--(4);
\end{tikzpicture}
\]
\end{example}

\begin{lemma}\label{lem:mirror-bridge}
A tree is odd if and only if it has a mirror-bridge.
\end{lemma}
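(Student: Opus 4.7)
The plan is to rely on the classical structural dichotomy for tree automorphisms: every automorphism $\sigma$ of a tree $T$ preserves the center of $T$, which is either a single vertex or a single edge. Consequently $\sigma$ either fixes some vertex of $T$ or swaps the two endpoints of the central edge. Moreover these two behaviors are mutually exclusive, since in any tree, for any edge $\{u,v\}$ and any other vertex $w$, one of $u,v$ lies on the unique path between $w$ and the other, forcing $\abs{d(w,u) - d(w,v)} = 1$; so $\sigma(w) = w$ together with $\sigma(u) = v$ would give the contradiction $d(w,u) = d(w,v)$. With this dichotomy in hand, proving \cref{lem:mirror-bridge} reduces to computing $\sgn(\sigma)$ in each of the two cases.

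In the vertex-fixing case I would root $T$ at the fixed vertex $v$ and orient every edge from parent to child. Since $\sigma$ fixes $v$ and preserves adjacency, it carries each parent-to-child edge to another parent-to-child edge with the same orientation, so no edge is reversed and $\sgn(\sigma) = +1$. In the edge-swapping case, with $\sigma$ swapping the endpoints $u, v$ of a (necessarily central) edge $f$, the restriction of $\sigma$ is an isomorphism between the half-branches $T_u$ and $T_v$ sending root $u$ to root $v$. Orienting every edge of $T_u \cup T_v$ away from its respective root (and $f$ arbitrarily), $\sigma$ sends each parent-to-child edge of $T_u$ to a parent-to-child edge of $T_v$ and symmetrically, so the only reversed edge is $f$ itself, giving $\sgn(\sigma) = -1$.

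With these two sign computations, both directions of \cref{lem:mirror-bridge} drop out. For the ``if'' direction, a mirror-bridge supplies by definition an automorphism that swaps its endpoints, which by the second computation has sign $-1$, so $T$ is odd. For the ``only if'' direction, an odd automorphism cannot fix a vertex (else sign $+1$), so by the dichotomy it must swap the endpoints of the central edge, which is therefore a mirror-bridge.

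The one step requiring care is the center-preservation dichotomy itself; I would either cite it or give a quick proof by the standard leaf-peeling argument: iteratively removing leaves commutes with $\sigma$, and the process terminates at either a single vertex or a single edge, which $\sigma$ must then preserve setwise. Everything else is a direct orientation check.
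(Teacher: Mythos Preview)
Your proof is correct and follows essentially the same approach as the paper's: both arguments use leaf-peeling to reduce to the center of $T$ (a single vertex or a single edge), orient edges by depth from the center, and observe that any automorphism preserves all such orientations except possibly the central edge. You package this as the classical center dichotomy and then compute $\sgn(\sigma)$ separately in the vertex-fixing and edge-swapping cases, whereas the paper performs the leaf-peeling and orientation in a single pass; the underlying idea is the same.
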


\begin{proof}
An automorphism of a tree that reverses a mirror-bridge must be odd.

Conversely, suppose a tree $T$ has an odd automorphism $\pi$. We will construct subtrees $T_0 \supsetneq T_1 \supsetneq \cdots \supsetneq T_k$ iteratively such that $\pi$ induces an automorphism on each $T_i$. 

Let $T_0 = T$. For each $i \ge 0$, if $T_i$ is not a single vertex or an edge, then let  $L_i$ denote the set of leaves of $T_i$, and set $T_{i+1}$ to be $T_i$ with all its leaves and pendent edges removed. When the process terminates at $i = k$, $T_k$ is either a single vertex or an edge. Since an automorphism preserves leaves, an easy induction argument shows that $\pi$ induces an automorphism for every $T_i$.

Make $T$ into a directed graph by orienting $T_k$ arbitrarily and orienting all edges from $L_i$ to $L_{i+1}$ for every $i$. Note that $\pi$ preserves the orientations of all edges except possibly $T_k$. Since $\pi$ is odd, $T_k$ must be a single edge and reversed by $\pi$, so that it is a mirror-bridge of $T$.
\end{proof}

\begin{definition}[Branch]
Let $T$ be a tree and $uv$ an edge of $T$. Let $B$ be the connected component of $T \setminus uv$ containing $v$. Then the rooted tree $(B, v)$ (or just $B$ itself) is called a \emph{branch of $T$ cut from $uv$}.
\end{definition}

\begin{example}
The branch $(B, v)$ of the tree $T$ below cut from $uv$ is highlighted.
\[
\begin{tikzpicture}
\node[v] (0) at (1,0) {};	
\node[v] (1) at (0,1) {};	
\node[v] (2) at (1,1) {};	
\node[v,label=above:$u$] (3) at (2,1) {};	
\node[v,label=above:$v$] (4) at (3,1) {};	
\node[v] (5) at (4,1) {};	
\node[v] (6) at (3,0) {};	
\draw (1)--(2)--(3)--(4) (0)--(2);
\draw[b] (6)--(4)--(5);
\end{tikzpicture}
\]
\end{example}

The following procedure produces a canonical even subgraph of a given forest.

\begin{definition}[Recursive cutting] \label{def:rc}
	Let $F$ be a forest. The \emph{recursive cutting} procedure applied to $F$ produces a sequence $(F_0, F_1, \dots)$ where $F_0 = F$, and each $F_{i+1}$ is obtained from $F_i$ from removing the bridge from each odd component of $F_i$. Let $F_\rc$ denote the even forest that the sequence eventually stabilizes to.
	
	For a digraph $\vec F$, we write $\vec F_\rc$ for $F_\rc$ with edge-orientations inherited from $\vec F$.
\end{definition}

\begin{example}
	Here is an example of recursive cutting:
\[
F = F_0 =
\begin{tikzpicture}[scale=.5]
\begin{scope}
\foreach \x in {0,...,5}
	\foreach \y in {0,1}
		\node[v] (\x\y) at (\x,\y) {};
\draw (00)--(10)--(20)--(21)--(11)--(01);
\draw (50)--(40)--(30)--(31)--(41)--(51);
\draw (31)--(21);
\end{scope}
\begin{scope}[xshift=5cm]
\node[v] (20) at (2,0) {};
\node[v] (30) at (3,0) {};
\node[v] (11) at (1,1) {};
\node[v] (21) at (2,1) {};
\node[v] (31) at (3,1) {};
\node[v] (41) at (4,1) {};
\draw (20)--(21)--(11);
\draw (30)--(31)--(41);
\draw (31)--(21);
\end{scope}
\end{tikzpicture}\ , \quad
F_1 =
\begin{tikzpicture}[scale=.5]
\foreach \x in {0,...,5}
	\foreach \y in {0,1}
		\node[v] (\x\y) at (\x,\y) {};
\draw (00)--(10)--(20)--(21)--(11)--(01);
\draw (50)--(40)--(30)--(31)--(41)--(51);
\begin{scope}[xshift=5cm]
\node[v] (20) at (2,0) {};
\node[v] (30) at (3,0) {};
\node[v] (11) at (1,1) {};
\node[v] (21) at (2,1) {};
\node[v] (31) at (3,1) {};
\node[v] (41) at (4,1) {};
\draw (20)--(21)--(11);
\draw (30)--(31)--(41);
\end{scope}
\end{tikzpicture}\ , \quad
F_2 = F_\rc =
\begin{tikzpicture}[scale=.5]
\foreach \x in {0,...,5}
	\foreach \y in {0,1}
		\node[v] (\x\y) at (\x,\y) {};
\draw (00)--(10)--(20)(21)--(11)--(01);
\draw (50)--(40)--(30)(31)--(41)--(51);
\begin{scope}[xshift=5cm]
\node[v] (20) at (2,0) {};
\node[v] (30) at (3,0) {};
\node[v] (11) at (1,1) {};
\node[v] (21) at (2,1) {};
\node[v] (31) at (3,1) {};
\node[v] (41) at (4,1) {};
\draw (20)--(21)--(11);
\draw (30)--(31)--(41);
\end{scope}
\end{tikzpicture}.
\]
If 
$
\vec F = 
\begin{tikzpicture}[scale=.5]
\foreach \x in {0,...,5}
	\foreach \y in {0,1}
		\node[v] (\x\y) at (\x,\y) {};
\draw[e] (00)--(10);
\draw[e] (10)--(20);
\draw[e] (20)--(21);
\draw[e] (11)--(21);
\draw[e] (11)--(01);
\draw[e] (40)--(50);
\draw[e] (40)--(30);
\draw[e] (31)--(30);
\draw[e] (31)--(41);
\draw[e] (41)--(51);
\draw[e] (31)--(21);
\begin{scope}[xshift=5cm]
\node[v] (20) at (2,0) {};
\node[v] (30) at (3,0) {};
\node[v] (11) at (1,1) {};
\node[v] (21) at (2,1) {};
\node[v] (31) at (3,1) {};
\node[v] (41) at (4,1) {};
\draw[e] (20)--(21);
\draw[e] (21)--(11);
\draw[e] (30)--(31);
\draw[e] (31)--(41);
\draw[e] (31)--(21);
\end{scope}
\end{tikzpicture}
$,
then
$
\vec F_\rc = 
\begin{tikzpicture}[scale=.5]
\foreach \x in {0,...,5}
	\foreach \y in {0,1}
		\node[v] (\x\y) at (\x,\y) {};
\draw[e] (00)--(10);
\draw[e] (10)--(20);
\draw[e] (11)--(21);
\draw[e] (11)--(01);
\draw[e] (40)--(50);
\draw[e] (40)--(30);
\draw[e] (31)--(41);
\draw[e] (41)--(51);
\begin{scope}[xshift=5cm]
\node[v] (20) at (2,0) {};
\node[v] (30) at (3,0) {};
\node[v] (11) at (1,1) {};
\node[v] (21) at (2,1) {};
\node[v] (31) at (3,1) {};
\node[v] (41) at (4,1) {};
\draw[e] (20)--(21);
\draw[e] (21)--(11);
\draw[e] (30)--(31);
\draw[e] (31)--(41);
\end{scope}
\end{tikzpicture}
$.
\end{example}

\begin{lemma} \label{lem:unique-cut}
	Let $F$ be a subgraph of a tree $T$ with $V(F) = V(T)$ such that all components of $F$ have equal number of vertices. 
	Then no other subgraph of $T$ is isomorphic to $F$, i.e., $|\Sub_F(T)| = 1$.
\end{lemma}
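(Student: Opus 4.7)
The plan is to prove the stronger statement that the spanning subgraph $F$ is uniquely determined by the pair $(T,k)$, where $k = |V(T)|/(\text{number of components of } F)$ is the common size of the components. Note that any $F' \in \Sub_F(T)$ satisfies $V(F') = V(T)$ (since $|V(F')| = |V(F)| = |V(T)|$ and $V(F') \subseteq V(T)$) and has all components of size $k$ (being isomorphic to $F$). So it suffices to show that there is at most one spanning sub-forest of $T$ whose components all have the same size $k$.

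The key step is the following characterization, depending only on $T$ and $k$:
\[
    e \in E(F) \iff \text{both components of } T \setminus e \text{ have size not divisible by } k.
\]
For the forward direction, if $e \in F$ then $e$ lies in some $F$-component $C$ of size $k$, and removing $e$ splits $C$ into two nonempty subtrees $C_1, C_2$ with $|C_1|+|C_2|=k$. The component of $T \setminus e$ containing $C_i$ consists of $C_i$ together with some whole $F$-components on that side, so its size is $\equiv |C_i| \pmod{k}$, and $1 \le |C_i| \le k-1$ gives a nonzero residue. Conversely, if $e \notin F$, then the two sides of $T \setminus e$ are each disjoint unions of whole $F$-components, so both sizes are divisible by $k$. (Note that since the two sides sum to $|V(T)|$, which is a multiple of $k$, one side is divisible by $k$ iff the other is, so the condition ``both not divisible'' can equivalently be stated as ``at least one not divisible''.)

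This characterization determines $E(F)$ from $T$ alone, so $F$ is the unique spanning sub-forest of $T$ with all components of size $k$. Therefore $F' = F$, and $|\Sub_F(T)| = 1$. I do not anticipate a significant obstacle: the argument is essentially divisibility bookkeeping, and the tree structure ensures that removing any edge cleanly separates the $F$-components on the two sides.
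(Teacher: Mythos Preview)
Your proof is correct and uses essentially the same divisibility idea as the paper: both arguments hinge on the observation that for an edge $e$ of $T$, the size of a branch of $T\setminus e$ is divisible by $k$ if and only if $e \notin E(F)$. The paper organizes this as a contradiction from a single well-chosen edge $uv \in E(F') \setminus E(F)$, whereas you package it as a direct characterization of $E(F)$ in terms of $(T,k)$; the mathematical content is the same.
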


\begin{proof}
	Suppose $F' \ne F$ is another subgraph of $T$ and $F' \cong F$. Let $C$ be a component of $F$, $C'$ a component of $F'$, with $V(C)$ and $V(C')$ overlapping but not identical, so that there exists $uv \in E(C')$ with $u \in V(C)$ and $v \notin V(C)$. Let $(B,v)$ be the branch of $T$ cut from $uv$. Then $V(B)$ is a union of components of $F$, so $|B|$ is divisible by $|C|$. On the other hand, $V(B)$ partitions into $V(C') \cap V(B)$ together with components of $F'$, so $V(B)$ is not divisible by $|C|$. Contradiction.
\end{proof}

Now we are ready to prove the main result of this section, that the undirected structure of an impartial tree is recursively bridge-mirrored.

\begin{proof}[Proof of \cref{prop:undirected}]
Let $\vec H$ be a weakly connected impartial digraph. By \cref{prop:tree}, $H$ is a tree. All components of $H_\rc$ are isomorphic due to the recursive cutting procedure. 
If $H_\rc$ has at least one edge, then \cref{prop:sgn-sum} with $\vec F = \vec H_\rc$ yields a contradiction due to \cref{lem:unique-cut}. Thus $H_\rc$ is edgeless, and hence $H$ is recursively bridge-mirrored, as desired. 
\end{proof}

\section{Directed structure} \label{sec:directed}

Recall from \cref{prop:components} that a digraph is impartial if and only if all its weakly connected components are impartial. 
From \cref{prop:undirected} we know that the undirected structure of a weakly connected impartial tree is a recursively bridge-mirrored tree. The goal of this section is to complete the proof of our main result \cref{thm:main}, which follows from the next claim  showing that the edge-orientation of such a tree is compatible with the recursive bridge-mirroring. 

\begin{proposition} \label{prop:directed}
Every weakly connected impartial digraph is recursively bridge-mirrored.	
\end{proposition}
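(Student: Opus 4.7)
The plan is to induct on $|H|$, with base cases $|H| \le 2$ being immediate ($|H|=2$ is a single directed edge, recursively bridge-mirrored by the second bullet of \cref{def:rbm}). For the inductive step, $H$ is a recursively bridge-mirrored tree by \cref{prop:undirected}; after appealing to structural properties of such trees (to be developed in \cref{sec:property}), I would fix a mirror-bridge $f = r_A r_B$ whose removal yields two half-branches $A$ and $B$, each recursively bridge-mirrored on $|H|/2$ vertices and isomorphic as rooted trees (rooted at $r_A, r_B$). Write $\vec A, \vec B$ for the digraphs with orientations inherited from $\vec H$. The inductive step then reduces to showing (i) $\vec A \cong \vec B$ as rooted digraphs, and (ii) $\vec A$ is impartial: the inductive hypothesis applied to $\vec A$ yields that $\vec A$ is recursively bridge-mirrored, and then \cref{def:rbm} combined with (i) and the bridge $f$ makes $\vec H$ recursively bridge-mirrored.

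For (i), I would invoke \cref{prop:sgn-sum} with carefully chosen even test digraphs $\vec F$ on $V(H)$. The idea is, for each edge $e$ of $A$ paired with its rooted-iso image $\phi(e)$ in $B$, to construct an even $\vec F$ so that $\Sub_F(\vec H)$ consists of a small controlled set of subgraphs paired by the mirror-bridge swap automorphism of $H$ (the one extending $\phi$). The vanishing of the signed sum in \cref{prop:sgn-sum} then forces the orientations of $e$ and $\phi(e)$ to match under $\phi$. As a warm-up, take $H$ to be the path on four vertices: removing a leaf edge from $H$ produces $F$ equal to an isolated vertex together with a path on three vertices, which is even; precisely two subgraphs of $H$ are isomorphic to $F$ (one per leaf-edge removal), paired by the mirror-bridge swap, and the vanishing signed sum forces the two leaf edges of $\vec H$ to be oriented consistently relative to the mirror-bridge---exactly the rooted-iso condition $\vec A \cong \vec B$.

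For (ii), I would adapt the polynomial-factorization argument from \cref{prop:components}. Given (i), $\vec H$ consists of two identical copies of $\vec A$ joined by the bridge $f$; although $\vec A$ is not a standalone weakly connected component of $\vec H$, the symmetric structure should allow factoring $P_{\vec H, n}(\bm a; \bm b)$ through the bridge, so that the identity $P_{\vec H, n}(\bm a; \bm b) = (a_1 + \cdots + a_n)^{|H|}$ from \cref{prop:poly} propagates, via unique polynomial factorization, to $P_{\vec A, n'}(\bm a; \bm b) = (a_1 + \cdots + a_{n'})^{|A|}$ for some $n'$, certifying that $\vec A$ is impartial.

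The main obstacle is step (i). For small $H$, removing a single leaf edge suffices to produce an even $F$ admitting a clean $\Sub_F(\vec H)$. For larger recursively bridge-mirrored trees, removing single edges typically leaves odd components (since half-branches of size at least $2$ are themselves odd by \cref{lem:mirror-bridge}), so one must instead remove a coordinated set of edges---for instance the pair $\{e, \phi(e)\}$, or a substructure cut deeper in the recursive hierarchy---while preserving evenness of every component. The structural analysis of \cref{sec:property} should supply the catalogue of even sub-forests of recursively bridge-mirrored trees, together with the analog of \cref{lem:unique-cut} needed to control $\Sub_F(\vec H)$, and should resolve questions such as the uniqueness of the mirror-bridge that I am implicitly using above.
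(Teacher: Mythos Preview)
Your outline matches the paper's: induct on $|H|$, establish (i) that the odd automorphism $\tau$ of $H$ preserves all edge-orientations of $\vec H$ except the mirror-bridge (this is the paper's \cref{lem:top-directed}, equivalent to your rooted isomorphism $\vec A \cong \vec B$), then (ii) deduce that each half-branch is itself impartial and invoke the induction hypothesis.

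For (ii) the paper takes a shorter path than your polynomial-factorization sketch: once (i) is known, every copy of $\vec H_1 := \vec H \setminus f$ in a tournament extends \emph{uniquely} to a copy of $\vec H$ (the bridge edge has a fixed orientation in the tournament, and by (i) either orientation realizes $\vec H$ the same way). Hence $\vec H_1$ is impartial directly from the definition, and then \cref{prop:components} gives that $\vec A$ is impartial. No factoring of $P_{\vec H,n}$ is needed.

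For (i) your diagnosis of the obstacle is right, and the paper's resolution is exactly the ``cut deeper in the recursive hierarchy'' idea you gesture at: for each non-bridge edge $f$ one takes $F = T_f := (T\setminus f)_{\rc}$, the recursive cutting of $T\setminus f$. The point you are missing is how to extract $z_e = z_{\tau e}$ from the resulting family of constraints $\prod_{e \in S_{T_f}} z_e = 1$: the paper shows (\cref{lem:upper-triangular}) that $f,\tau f \in S_{T_f}$ and every other $e \in S_{T_f}$ satisfies $|E(T_e)| > |E(T_f)|$, so ordering edges by $|E(T_e)|$ makes the system upper-triangular over $\mathbb{F}_2$ with $1$'s on the diagonal. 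Proving this triangular structure, in turn, rests on \cref{lem:sub-rc} (parity of $|\Sub_F(G)|$ is preserved under recursive cutting of $G$) together with the non-obvious fact (\cref{prop:tree-rc}) that $T_{e_1} \cong T_{e_2}$ forces $e_2 = \tau e_1$; the latter is what \cref{sec:property} is devoted to.
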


In fact, it suffices to show that the directed structure is compatible with the involution of the tree.

\begin{lemma} \label{lem:top-directed}
	Let $\vec H$ be a weakly connected impartial digraph with $|H| > 1$. Then the odd automorphism of $H$ preserves the orientations of all edges of $\vec H$ except for the mirror-bridge.
\end{lemma}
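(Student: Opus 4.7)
The plan is to argue by contradiction. By \cref{prop:undirected}, $H$ is recursively bridge-mirrored; let $f$ denote its mirror-bridge, $B_1$ and $B_2$ its two half-branches, and $\sigma$ the odd automorphism of $H$, which reflects $H$ across $f$ (swapping $B_1$ and $B_2$ while flipping $f$). Let $E_- \subseteq E(H) \setminus \{f\}$ be the set of non-bridge edges whose orientation in $\vec H$ is reversed by $\sigma$; observe that $E_-$ is $\sigma$-invariant. Assume for contradiction that $E_- \ne \emptyset$. The goal is to construct an even digraph $\vec F$ on $|H|$ vertices with at least one edge whose sign-sum in \cref{prop:sgn-sum} is nonzero, contradicting impartiality of $\vec H$.

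The key observation is that for any $\sigma$-invariant spanning even sub-forest $F \subseteq H$, the involution $\sigma$ acts on $\Sub_F(\vec H)$ (via its action on undirected subgraphs $F' \subseteq H$, with orientations inherited from $\vec H$) with orbits of size $1$ or $2$. A short computation comparing the isomorphism $\tau_1 \colon F \to F'$ with $\sigma\tau_1 \colon F \to \sigma F'$ gives, for a size-$2$ orbit $\{F', \sigma F'\}$,
\[
\sgn(\vec F, \vec{\sigma F'}) = (-1)^{r(F')}\, \sgn(\vec F, \vec F'),
\qquad
r(F') := |E(F') \cap (E_- \cup \{f\})|.
\]
Hence each size-$2$ orbit contributes $\sgn(\vec F, \vec F')\bigl(1 + (-1)^{r(F')}\bigr)$ to the sign-sum: this vanishes when $r(F')$ is odd and equals $\pm 2\, \sgn(\vec F, \vec F')$ otherwise. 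Each size-$1$ orbit contributes $\pm 1$. Note that if the lemma held (i.e.\ $E_- = \emptyset$), then $r(F') = [f \in E(F')]$, and every pair orbit containing $f$ cancels automatically.

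The main step is to choose $F$ so that, once $E_- \ne \emptyset$, the parity of $r(F')$ flips on at least one pair orbit, producing an uncancelled $\pm 2$ contribution. In the base case $H = P_4$ (with mirror-bridge $f$ the middle edge), one takes $F$ to be the $3$-vertex path plus one isolated vertex: $\Sub_F(\vec H)$ is a single $\sigma$-pair orbit, and if $E_-$ is nonempty, $r(F') = 2$ rather than $1$, yielding a $\pm 2$ sum. For larger recursively bridge-mirrored trees, analogous even sub-forests built from longer odd-length paths plus isolated vertices (or more intricate symmetric sub-forests) can detect $E_-$ at different depths of the recursive construction. Establishing the existence of such a detecting $F$ for every non-empty $E_-$ is the role of the structural analysis of recursively bridge-mirrored trees promised in \cref{sec:property}.

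The main obstacle is constructing, uniformly for every non-empty $\sigma$-invariant $E_-$, an even $F$ that detects it. Since $E_-$ is a union of $\sigma$-orbits of non-bridge edges, one anticipates a family of $F$'s parametrized by the recursive levels of $H$, each detecting reversals at its level; the lemmas of \cref{sec:property} should provide such a family. An alternative inductive strategy on $|H|$ would first require establishing impartiality of the induced digraphs on the half-branches $\vec B_1, \vec B_2$, which does not follow directly from impartiality of $\vec H$; hence the direct sign-sum approach via \cref{prop:sgn-sum} seems more tractable.
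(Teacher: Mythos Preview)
Your proposal is a plan rather than a proof: the decisive step---producing, for every nonempty $\sigma$-invariant $E_-$, an even spanning sub-forest $F$ whose sign-sum is nonzero---is asserted but not carried out. The $P_4$ case is fine, but the sentence ``analogous even sub-forests built from longer odd-length paths \dots\ can detect $E_-$'' is not an argument, and \cref{sec:property} does not supply one: \cref{prop:tree-rc} is a statement about the forests $T_f=(T\setminus f)_\rc$ and their mutual non-isomorphism, not about your orbit computation. Moreover, even granting that some pair orbit has $r(F')$ flip parity, you give no mechanism to prevent several $\pm 2$ contributions (and the $\pm 1$'s from $\sigma$-fixed copies) from cancelling. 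So the proposal has a genuine gap at exactly the point where the work lies.

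The paper's argument is in the same spirit but adds two ideas you are missing. First, rather than analyse the sign-sum of $\vec H$ directly, it fixes a recursively bridge-mirrored $\vec T$ with $T=H$ and multiplies the two sign-sums (both zero by \cref{prop:sgn-sum}); the dependence on the reference orientation $\vec F$ disappears and one is left with the purely mod-$2$ identity
\[
\prod_{F'\in \Sub_F(H)}\ \prod_{e\in E(F')} z_e \;=\; 1,\qquad z_e=\pm1,
\]
which, setting $w_e=z_e z_{\tau e}$, becomes a linear system over $\mathbb{F}_2$ in the variables $w_e$, $e\in E(T')$. This is exactly your ``$E_-=\emptyset$'' reformulated, but now free of sign ambiguities and of the fixed-orbit issue. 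Second, the paper specifies the test forests: one uses $F=T_f=(T\setminus f)_\rc$ for each non-bridge edge $f$, and \cref{lem:upper-triangular} (proved via \cref{lem:sub-rc} and \cref{prop:tree-rc}) shows the resulting system is upper-triangular when the $w_e$'s are ordered by $|E(T_e)|$, forcing $w_e=1$ for all $e$. Your global pairing by $\sigma$ is a special case of the more flexible pairing in \cref{lem:sub-rc}, which uses odd automorphisms of \emph{sub}trees at each level of the recursive cutting; that extra flexibility is what makes the counting go through.
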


\begin{remark}
	An easy induction argument shows that every undirected recursively bridge-mirrored tree with at least one edge has exactly two automorphisms: the identity map, and the odd automorphism that swaps the two half-branches and reverses the mirror-bridge.
\end{remark}

\begin{proof}[Proof of \cref{prop:directed} assuming \cref{lem:top-directed}]
We apply induction on $|H|$. There is nothing to show if $|H| = 1$. 
Now assume $|H| > 1$, so that \cref{lem:top-directed} applies. Let $\vec H_1$ denote $\vec H$ after removing its mirror-bridge. 
Then \cref{lem:top-directed} implies that every copy of $\vec H_1$ in a tournament can be extended uniquely to a copy of $\vec H$, and hence $\vec H_1$ must be impartial.
By \cref{prop:components}, each component of $\vec H_1$ is impartial, and thus recursively bridge-mirrored by induction. By \cref{lem:top-directed} again, we see that $\vec H$ is recursively bridge-mirrored as well.
\end{proof}

In the rest of this section, we prove \cref{lem:top-directed}. By \cref{prop:undirected}, $H$ is a recursively bridge-mirrored tree. Let $\vec T$ be a recursively bridge-mirrored digraph with the same undirected structure $H = T$. Let $\tau$ denote the odd automorphism of $H$.

For each undirected edge $e \in E(H) = E(T)$, let $z_e = 1$ if the orientations of $e$ on $\vec H$ and $\vec T$ agree, and $z_e = -1$ otherwise. To prove \cref{lem:top-directed}, it remains to show that $z_e = z_{\tau e}$ for all $e \in E(H)$.

By \cref{prop:sgn-sum}, if $\vec F$ is an even digraph with $|H|$ vertices and at least one edge, then $|\Sub_F(H)|$ is even and
\[
\prod_{\vec F' \in \Sub_F(\vec H)} \sgn(\vec F, \vec F') = \prod_{\vec F' \in \Sub_F(\vec T)} \sgn(\vec F, \vec F') = (-1)^{|\Sub_F(H)|/2}.
\]
Multiplying the two products together, we obtain that for every even graph $F$ with $|H|$ vertices and at least one edge,
\begin{equation}\label{eq:T-E}
\prod_{F' \in \Sub_F(H)} \prod_{e \in E(F')} z_e = 1.
\end{equation}
We shall use \cref{eq:T-E} with $F$ being
\[
T_f := (T \setminus f)_\rc
\]
for every edge $f$ of $T$ other than its mirror-bridge. Note that $T_f$ has at least one edge as long as $f$ is not the mirror-bridge of $T$, since the recursive cutting procedure preserves the largest odd factor of the number of vertices of each component.

We are interested in which $z_e$'s appear an odd number of times on the left-hand side of \cref{eq:T-E}. Let
\begin{align*}
S_F &:= \{e \in E(T) : e \text{ is contained in an odd number of subgraphs of $H$ isomorphic to } F \} 
\\ &= \{e \in E(T) : |\Sub_F(T)| - |\Sub_F(T \setminus e)| \equiv 1 \pmod 2\}.
\end{align*}
Since $z_e = \pm 1$, \cref{eq:T-E} implies that for all even $F$ with at least one edge
\begin{equation}\label{eq:system-Sf}
\prod_{e \in S_F} z_e = 1.
\end{equation}

\begin{example} \label{example:path-directed}
We work out the above computation explicitly for a small example.
Let $T$ be a path with 8 vertices. 
\[
T = 
\begin{tikzpicture}[baseline=-.7ex]
\foreach \x in {1,...,8}
	\node[v,label=above:{$\x$}] (\x) at (\x,0) {};
\draw (1)--(2);
\draw (2)--(3);
\draw (3)--(4);
\draw (4)--(5);
\draw (5)--(6);
\draw (6)--(7);
\draw (7)--(8);
\end{tikzpicture}
\phantom{= T}
\]
We have
\[
T_{12} =
\begin{tikzpicture}
\foreach \x in {1,...,8}
	\node[v] (\x) at (\x,0) {};
\draw (1)(2)--(3)--(4)--(5)--(6)--(7)--(8);
\end{tikzpicture}
\phantom{= T_{12}}
\]
Thus $\Sub_{T_{12}}(H)$ has 2 elements:
\[
\begin{tikzpicture}
\begin{scope}
\foreach \x in {1,...,8}
	\node[v] (\x) at (\x,0) {};
\draw (1)(2)--(3)--(4)--(5)--(6)--(7)--(8);
\end{scope}
\begin{scope}[yshift=-.5cm]
\foreach \x in {1,...,8}
	\node[v] (\x) at (\x,0) {};
\draw (1)--(2)--(3)--(4)--(5)--(6)--(7)(8);
\end{scope}
\end{tikzpicture}
\]
So $S_{T_{12}} = \{12, 78\}$. Thus, by \cref{eq:system-Sf},
\[
z_{12} = z_{78}.
\]
Likewise, $\Sub_{T_{34}}(H)$ has 2 elements:
\[
\begin{tikzpicture}
\begin{scope}
\foreach \x in {1,...,8}
	\node[v] (\x) at (\x,0) {};
\draw (1)--(2)--(3)(4)--(5)--(6)--(7)--(8);
\end{scope}
\begin{scope}[yshift=-.5cm]
\foreach \x in {1,...,8}
	\node[v] (\x) at (\x,0) {};
\draw (1)--(2)--(3)--(4)--(5)(6)--(7)--(8);
\end{scope}
\end{tikzpicture}
\]
So $S_{T_{34}} = \{34,56\}$. Thus, by  \cref{eq:system-Sf},
\[
z_{34} = z_{56}.
\]
Finally, we have
\[
T_{23} = 
\begin{tikzpicture}
\foreach \x in {1,...,8}
	\node[v] (\x) at (\x,0) {};
\draw (1)(2)(3)--(4)--(5)(6)--(7)--(8);
\end{tikzpicture}
\phantom{= T_{12}}
\]
So $\Sub_{T_{23}}(H)$ has 6 elements:
\[
\begin{tikzpicture}
\begin{scope}
\foreach \x in {1,...,8}
	\node[v] (\x) at (\x,0) {};
\draw (1)(2)(3)--(4)--(5)(6)--(7)--(8);
\end{scope}
\begin{scope}[yshift=-.5cm]
\foreach \x in {1,...,8}
	\node[v] (\x) at (\x,0) {};
\draw (1)(2)--(3)--(4)(5)(6)--(7)--(8);
\end{scope}
\begin{scope}[yshift=-1cm]
\foreach \x in {1,...,8}
	\node[v] (\x) at (\x,0) {};
\draw (1)--(2)--(3)(4)(5)(6)--(7)--(8);
\end{scope}
\begin{scope}[yshift=-1.5cm]
\foreach \x in {1,...,8}
	\node[v] (\x) at (\x,0) {};
\draw (1)(2)--(3)--(4)(5)--(6)--(7)(8);
\end{scope}
\begin{scope}[yshift=-2cm]
\foreach \x in {1,...,8}
	\node[v] (\x) at (\x,0) {};
\draw (1)--(2)--(3)(4)(5)--(6)--(7)(8);
\end{scope}
\begin{scope}[yshift=-2.5cm]
\foreach \x in {1,...,8}
	\node[v] (\x) at (\x,0) {};
\draw (1)--(2)--(3)(4)--(5)--(6)(7)(8);
\end{scope}
\end{tikzpicture}
\]
Thus $S_{T_{23}} = \{12,23,34,56,67,78\}$. Thus by \cref{eq:system-Sf},
\[
z_{12}z_{23}z_{34} = z_{56}z_{67}z_{78},
\]
and we deduce $z_{23} = z_{67}$ as well.  \qed 
\end{example}

The next lemma shows that \cref{eq:system-Sf} can be made into an upper-triangular linear system (mod 2) with an appropriate choice and ordering of subgraphs $F$'s.

\begin{lemma}\label{lem:upper-triangular}
	Let $T$ be a recursively bridge-mirrored tree. For every edge $f$ of $T$ other than its mirror-bridge, we have $f, \tau f \in S_{T_f}$ and $|E(T_e)| > |E(T_f)|$ for all $e \in S_{T_f} \setminus \{f, \tau f\}$.
\end{lemma}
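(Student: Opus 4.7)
The plan is to prove the lemma by induction on $|T|$, exploiting the recursive bridge-mirror structure of $T$. The base case $|T| = 2$ is vacuous, since the only edge is the mirror-bridge (excluded). For the inductive step, write $T = A \cup \{b\} \cup A'$ where $b$ is the mirror-bridge and $A, A'$ are the isomorphic half-branches exchanged by the odd automorphism $\tau$; without loss of generality, $f \in E(A)$.

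First I would observe that $S_{T_f}$ is $\tau$-invariant. Since $\tau$ is an automorphism of $T$ as an undirected graph, it permutes $\Sub_{T_f}(T)$, and edge-incidence is preserved: $e \in E(F')$ iff $\tau(e) \in E(\tau F')$. Hence the number of subgraphs containing $e$ equals the number containing $\tau(e)$, so $e \in S_{T_f}$ iff $\tau(e) \in S_{T_f}$. This reduces the membership claim $\{f, \tau f\} \subseteq S_{T_f}$ to just $f \in S_{T_f}$. For the latter, I would show that
\[
\#\{F' \in \Sub_{T_f}(T) : f \in E(F')\} = |\Sub_{T_f}(T)| - |\Sub_{T_f}(T \setminus f)|
\]
is odd via two sub-claims: (i) $T_f$ is the unique spanning subgraph of $T \setminus f$ isomorphic to $T_f$, so the second term is $1$; and (ii) $|\Sub_{T_f}(T)|$ is even. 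Sub-claim (i) is a uniqueness-of-recursive-cut property, provable by iteratively applying the argument of \cref{lem:unique-cut} through successive iterations of the recursive cutting procedure, since at each iteration the removed mirror-bridges are canonically determined. Sub-claim (ii) follows from the $\tau$-action: $|\Sub_{T_f}(T)| \equiv |\mathrm{Fix}(\tau)| \pmod 2$, and I would argue that the $\tau$-fixed subgraphs admit an additional pairing (for instance via the odd automorphisms of the half-branches) so that the fixed-point count is even.

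For the strict inequality $|E(T_e)| > |E(T_f)|$ whenever $e \in S_{T_f} \setminus \{f, \tau f\}$, I would carry out a careful structural comparison using the inductive hypothesis applied to $A$ and $A'$. The guiding idea is: if $|E(T_e)| \leq |E(T_f)|$, then removing $e$ from $T$ triggers a recursive cut at least as disruptive as removing $f$ does, and I would show by analyzing the components arising in each recursive cut that the associated subgraph-counting constraint forces $e$ to be ``aligned'' with $f$ under $\tau$, hence $e \in \{f, \tau f\}$. This part is the main obstacle. The difficulty is that $|E(T_f)|$ is \emph{not} a simple function of a ``level'' invariant of $f$ in the recursive decomposition --- small computations show that different recursively bridge-mirrored trees can yield different values of $|E(T_f)|$ for edges in analogous positions --- so the argument must track directly how individual components become odd during recursive cutting. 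A clean inductive proof likely requires a case analysis based on whether $e$ lies in the same half-branch of $T$ as $f$, and on the position of $e$ within that half-branch relative to $f$.
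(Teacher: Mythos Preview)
Your proposal has a genuine gap in exactly the place you flag as ``the main obstacle'': the strict inequality $|E(T_e)| > |E(T_f)|$ for $e \in S_{T_f}\setminus\{f,\tau f\}$. You offer only the hope that an induction with case analysis on the position of $e$ relative to $f$ will go through, but no mechanism for why $|E(T_e)| \le |E(T_f)|$ should force $e \in \{f,\tau f\}$. The paper does \emph{not} argue by induction on $|T|$. Instead it proves a single parity lemma (\cref{lem:sub-rc}): for any even forest $F$ and any forest $G$, one has $|\Sub_F(G)| \equiv |\Sub_F(G_\rc)| \pmod 2$, by pairing copies of $F$ not contained in $G_\rc$ via the odd automorphism of the component containing the first removed mirror-bridge they use. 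With $G=T$ this gives $|\Sub_{T_f}(T)|\equiv |\Sub_{T_f}(T_\rc)|=0$ (subsuming your sub-claim (ii) in one line); with $G=T\setminus e$ it gives $|\Sub_{T_f}(T\setminus e)|\equiv |\Sub_{T_f}(T_e)|$. The right side is $1$ when $e\in\{f,\tau f\}$ (then $T_e\cong T_f$) and $0$ when $|E(T_e)|<|E(T_f)|$. The remaining case $|E(T_e)|=|E(T_f)|$, $e\notin\{f,\tau f\}$ is disposed of by a separate rigidity statement (\cref{prop:tree-rc}): if $T_{e_1}\cong T_{e_2}$ then $\tau$ carries $e_1$ to $e_2$. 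That proposition is the real work and occupies all of \cref{sec:property}; it is proved by a structural analysis of branches of recursively bridge-mirrored trees, not by the kind of induction you sketch.

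Your treatment of the membership claim is also shakier than it needs to be. Sub-claim~(i) asserts the exact value $|\Sub_{T_f}(T\setminus f)|=1$, but \cref{lem:unique-cut} requires all components of $F$ to have the same number of vertices, which $T_f$ typically does not (e.g.\ in \cref{example:path-directed}, $T_{23}$ has components of sizes $1,1,3,3$); the ``iterative'' extension you allude to is not immediate. Sub-claim~(ii)'s secondary pairing of $\tau$-fixed copies ``via the odd automorphisms of the half-branches'' is not obviously well-defined or fixed-point-free. Both difficulties disappear once you have \cref{lem:sub-rc}, which delivers the needed parities directly without ever computing exact counts.
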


\begin{proof}[Proof of \cref{lem:top-directed} assuming \cref{lem:upper-triangular}]
	We see that $S_F$ is invariant under the automorphism $\tau$ of $T$ for every $F \subseteq T$. Let $T'$ be a half-branch of $T$. Write $w_e = z_e z_{\tau e}$ for each $e \in E(T')$. Then \cref{eq:system-Sf} gives
	\[
	\prod_{e \in S_{T_f} \cap E(T')} w_e = 1 \quad \text{ for all }  f \in E(T').
	\]
	Then by \cref{lem:upper-triangular}, we can solve this system of equations to yield $w_e = 1$ for all $e \in E(T')$ by decreasing induction on $|E_{T_e}|$ (break ties arbitrarily). Alternatively, this system is equivalent to a linear system of equations (mod 2) whose coefficient matrix is upper-triangular with 1's on the diagonal when the variables $w_e$ are sorted according to $|E(T_e)|$ (breaking ties arbitrarily). 
	
	Thus, for all $e \in E(T')$, $z_e z_{\tau e} = w_e = 1$, and hence $\tau$ preserves the orientation of $e$ in $\vec H$ since $\tau$ does the same for $\vec T$.
\end{proof}

We shall prove \cref{lem:upper-triangular} by pairing up copies of $F$ in $T$ by applying an odd automorphism on some subgraph of $T$.

\begin{lemma} \label{lem:sub-rc}
Let $F$ be an even subgraph of a forest $G$. Then
\[
|\Sub_F(G)| \equiv |\Sub_F(G_\rc)| \pmod 2.
\]
\end{lemma}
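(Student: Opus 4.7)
The plan is to reduce to a single mirror-bridge removal and then apply a fixed-point-free involution. I will proceed by induction on $|E(G)| - |E(G_\rc)|$ after justifying the following reduction: if $f$ is a mirror-bridge of some odd component $C$ of $G$ and $G' := G - f$, then $G'_\rc = G_\rc$, so the inductive step reduces the lemma to showing $|\Sub_F(G)| \equiv |\Sub_F(G - f)| \pmod 2$. The identity $G'_\rc = G_\rc$ follows from an easy component-wise recursive description of recursive cutting: $G_\rc$ is the disjoint union over the components $C_i$ of $G$ of $(C_i)_\rc$, and for an odd tree $C_i$ the subgraph $(C_i)_\rc$ is the disjoint union of the recursive cuttings of the two half-branches obtained by deleting the mirror-bridge of $C_i$.

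Having made this reduction, I let $C$ be the odd component of $G$ containing $f$ and choose an involutive automorphism $\sigma$ of $C$ that swaps the endpoints of $f$; such a $\sigma$ is obtained by fixing an isomorphism $\phi$ between the two half-branches of $C$ and patching $\phi$ with $\phi^{-1}$. I extend $\sigma$ to $G$ by the identity on the remaining components. Then $\sigma$ is a graph automorphism of $G$ fixing $f$ as an unordered edge, so $F' \mapsto \sigma(F')$ is an involution on the set $\mathcal{S} := \{ F' \in \Sub_F(G) : f \in E(F') \}$. The goal becomes showing that this involution is fixed-point-free, whence $|\mathcal{S}|$ is even. Suppose $\sigma(F') = F'$ for some $F' \in \mathcal{S}$. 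Then $\sigma$ restricts to an automorphism of $F'$; the component $F''$ of $F'$ containing $f$ is stabilized by $\sigma$, since both endpoints of $f$ lie in $F''$ and $\sigma$ swaps them. Hence $\sigma|_{F''}$ is an automorphism of the tree $F''$ swapping the endpoints of $f$, making $f$ a mirror-bridge of $F''$. By \cref{lem:mirror-bridge}, $F''$ is odd, so $F' \cong F$ has an odd component, contradicting the hypothesis that $F$ is even.

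The most delicate point in writing this up carefully is the initial reduction---verifying $G'_\rc = G_\rc$ via the component-wise recursive description of recursive cutting---but this is a routine bookkeeping check because mirror-bridge removals in distinct components commute. The involution argument itself is short and boils down to the observation, implicit in \cref{lem:mirror-bridge}, that an even forest admits no automorphism swapping the endpoints of any of its edges.
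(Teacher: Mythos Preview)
Your proof is correct and uses essentially the same idea as the paper's: pair up copies of $F$ via the odd (mirror-bridge) involution of an ambient component, and observe that a fixed point would force a mirror-bridge in a component of $F$, contradicting evenness. The only difference is organizational---you induct by stripping one mirror-bridge at a time (which cleanly sidesteps any well-definedness check), whereas the paper pairs all of $\Sub_F(G)\setminus\Sub_F(G_\rc)$ in one shot by ordering the edges removed during recursive cutting and reflecting across the first such edge present in $F'$.
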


\begin{proof}
Order the edges of $G$ according to the order that they are removed when $G$ is recursively cut, breaking ties arbitrarily, i.e., edges of $G_i \setminus G_{i+1}$ appear earlier than those of $G_{i+1}\setminus G_{i+2}$, with the edges of $G_\rc$ appearing last.

Let $F'$ be a copy of $F$ in $G$ but not contained in $G_\rc$. Let $e$ be the first edge of $G$ in the above order contained in $F'$, and $k$ the largest integer such that $F' \subseteq G_k$.
Then $e$ is a mirror-bridge of its component in $G_k$. Let $\sigma$ denote the automorphism of $G_k$ obtained by applying an odd automorphism on the component of $e$ and leaving other components of $G_k$ fixed. Note that $\sigma$ does not induce an automorphism of $F'$, or else the component of $F'$ containing $e$ would be odd, contradicting $F$ being even. 

The same procedure applied to $\sigma(F')$ recovers the same $e$, and hence recovers the original $F'$ (as long as we pick a consistent choice of an odd automorphism for each connected odd subgraph). Thus we have paired up copies of $F$ in $G$ but not contained in $G_\rc$, thereby proving the lemma.
\end{proof}

\begin{example}
	Let us illustrate the pairing in the above proof. Let $G$ be the following graph, with the first three edges in the recursive cutting removal ordering labeled.
\[
\begin{tikzpicture}
	\foreach \x in {0,...,7}
	\foreach \y in {0,1}
	\node[v] (\x\y) at (\x,\y) {};
\draw (00)--(10)--(20)--(30)--node[left, font=\scriptsize] {$2$}(31)--(21)--(11)--(01);
\draw (70)--(60)--(50)--(40)--node[right, font=\scriptsize] {$3$}(41)--(51)--(61)--(71);
\draw (31)-- node[below, font=\scriptsize] {$1$}(41);
\end{tikzpicture}
\]
Then the following two highlighted subgraphs are paired together in the proof via reflecting the right half-branch of $G$ across edge 3 (highlighted) in 
$G_1 = 
\begin{tikzpicture}[scale=.5]
	\foreach \x in {0,...,7}
	\foreach \y in {0,1}
	\node[v] (\x\y) at (\x,\y) {};
\draw (00)--(10)--(20)--(30)--(31)--(21)--(11)--(01);
\draw (70)--(60)--(50)--(40) (41)--(51)--(61)--(71);
\draw[b] (40)--(41);
\end{tikzpicture}$:

\[
\begin{tikzpicture}
	\foreach \x in {0,...,7}
	\foreach \y in {0,1}
	\node[v] (\x\y) at (\x,\y) {};
	\draw (00)--(10)--(20)--(30)--(31)--(21)--(11)--(01);
	\draw (70)--(60)--(50)--(40)--(41)--(51)--(61)--(71);
	\draw (31)-- (41);
	\draw[b] (71)--(61)--(51) (41)--(40)--(50) (30)--(20)--(10);
\end{tikzpicture}
\hspace{4em}
\begin{tikzpicture}
	\foreach \x in {0,...,7}
	\foreach \y in {0,1}
	\node[v] (\x\y) at (\x,\y) {};
\draw (00)--(10)--(20)--(30)--(31)--(21)--(11)--(01);
\draw (70)--(60)--(50)--(40)--(41)--(51)--(61)--(71);
\draw (31)-- (41);
	\draw[r] (70)--(60)--(50) (40)--(41)--(51) (30)--(20)--(10);
\end{tikzpicture}
\]
\end{example}

\begin{proof}[Proof of \cref{lem:upper-triangular}]
	By \cref{lem:sub-rc}, $|\Sub_{T_f}(T)| \equiv |\Sub_{T_f}(T_\rc)| = 0 \pmod 2$ since $T_\rc$ is edgeless and $T_f$ is even and not edgeless. Also by \cref{lem:sub-rc} (recall $T_e = (T\setminus e)_\rc$),
	\begin{align*}
	|\Sub_{T_f}(T\setminus e)|
	&\equiv |\Sub_{T_f}(T_e)| \pmod 2
	\\&= \begin{cases}
 	 1 & \text{if } e \in \{f, \tau f\}, \\
 	 0 & \text{if } |E(T_e)| \le |E(T_f)| \text{ and } e \notin \{f,\tau f\},
 	  \end{cases}
	\end{align*}
	(we do not say what happens when $|E(T_e)| > |E(T_f)|$)
	since, in the first case, $T_f \cong T_e$, and, in the second case, either $|E(T_e)| < |E(T_f)|$ so that $T_e$ cannot contain $T_f$ as a subgraph, or $|E(T_e)| = |E(T_f)|$, in which case $|\Sub_{T_f}(T_e)| = 0$ unless $T_e \cong T_f$, which is ruled out by the upcoming \cref{prop:tree-rc} in the next section.
\end{proof}

\section{Properties of undirected recursively bridge-mirrored trees} \label{sec:property}

It remains to prove the following claim, which was invoked at the end of the previous section. Recall that $T_e := (T\setminus e)_\rc$.

\begin{proposition} \label{prop:tree-rc}
	Let $T$ be a recursively bridge-mirrored tree. Let $e_1$ and $e_2$ be two distinct edges in $T$. If $T_{e_1} \cong T_{e_2}$, then the odd automorphism of $T$ carries $e_1$ to $e_2$.
\end{proposition}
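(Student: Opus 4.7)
I'll proceed by strong induction on $|T|$, with the cases $|T| \le 2$ being vacuous. Write $T = T' \cup T'' \cup \{b\}$ where $b$ is the top mirror-bridge and $T' \cong T''$ are the two rbm half-branches with roots $r'$ and $r''$; let $\tau$ denote the involutive odd automorphism of $T$ swapping the halves and reversing $b$. First I'll show that $b$ is distinguished: $T_b = \emptyset$ because $T \setminus b = T' \sqcup T''$ unwinds completely under recursive cutting, while for any $e \ne b$ (say $e \in T'$), the component of $T \setminus e$ containing $b$ has $|B_e| + |T''|$ vertices, with $1 \le |B_e| \le |T'|-1$; this count lies strictly between $|T|/2$ and $|T|$ and hence is not a power of $2$. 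Since a connected tree fully unwinds under recursive cutting only if it is rbm (and thus of power-of-$2$ order), this component must retain at least one edge, so $T_e \ne \emptyset$. Combined with $T_{e_1} \cong T_{e_2}$ and distinctness, this forces $e_1, e_2 \ne b$.

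By symmetry WLOG $e_1 \in T'$. If $e_2 \in T''$, set $e_2^* = \tau(e_2) \in T'$; since $\tau$ is an automorphism of $T$, $T_{e_2} \cong T_{e_2^*}$, so replacing $e_2$ by $e_2^*$ we may assume both edges lie in $T'$. It therefore suffices to prove: if $e_1 \ne e_2$ are distinct edges of $T'$, then $T_{e_1} \not\cong T_{e_2}$, for then $e_1 = e_2^*$, i.e.\ $\tau(e_1) = e_2$, as required.

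For this crucial step, for each $e \in T'$ write $T \setminus e = A_e \sqcup C_e$ with $A_e \subseteq T'$ the branch not containing $r'$ and $C_e = B_e \cup T'' \cup \{b\}$ the component containing all of $T''$; then $T_e = (A_e)_\rc \sqcup (C_e)_\rc$. The inequality $|A_e| < |T|/2 < |C_e|$, together with the recognizable presence of the cut-down of the rbm piece $T''$ inside $(C_e)_\rc$, lets one identify the $A$-side and $C$-side components of $T_e$ intrinsically, so any isomorphism $T_{e_1} \cong T_{e_2}$ restricts to $(A_{e_1})_\rc \cong (A_{e_2})_\rc$ and $(C_{e_1})_\rc \cong (C_{e_2})_\rc$. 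Since $|B_i| < |T''|$, the bridge $b$ is never a mirror-bridge of $C_i$, so a careful analysis of which mirror-bridges are cut in the recursive cutting of $C_i$ (those lying inside the rbm subtree $T''$ versus those in the $B_i$-appendage) shows that $(C_i)_\rc$ determines the rooted subtree $(B_i, r')$ up to isomorphism. Combined with the determination of $A_i$ from $(A_i)_\rc$, this pins down $e_i$ in $T'$, contradicting $e_1 \ne e_2$. The main obstacle is precisely this recognition step: showing that the attachment of $B_i$ to $T''$ along $b$ leaves a canonical fingerprint in $(C_i)_\rc$ surviving all recursive cuts. I expect the key technical tools to be \cref{lem:unique-cut} applied iteratively inside $(C_i)_\rc$, together with the structural fact noted in the remark following \cref{lem:top-directed} that the nontrivial automorphism of an rbm tree has no fixed vertex; this rigidity is what rules out the different subtrees $B_i \subseteq T'$ giving isomorphic rooted attachments.
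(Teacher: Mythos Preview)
Your initial reductions are sound: the argument that $T_b$ is edgeless while $T_e$ is not for $e \ne b$ (via the non-power-of-$2$ vertex count of the large component) is correct, and the reduction to $e_1,e_2$ both lying in one half-branch $T'$ is clean. You are also right that the $A_e$-side and $C_e$-side components of $T_e$ can be separated intrinsically, though the actual invariant is the largest odd divisor of the component order (which recursive cutting preserves), not the raw size inequality you cite; this is precisely the device the paper invokes.

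The genuine gap is the recognition step. Your claim that $(C_i)_\rc$ determines the rooted tree $(B_i,r')$ is asserted but not argued, and the companion claim that $(A_i)_\rc$ determines $A_i$ is false without further input: any two recursively bridge-mirrored trees of the same order have the same (edgeless) recursive cutting. The tools you name (\cref{lem:unique-cut} and fixed-point-freeness of $\tau$) do not yield such a reconstruction; the recursive cutting of $C_i$ interleaves cuts inside the rbm piece $T''$ with cuts depending on how $B_i$ hangs off $r'$, and you give no mechanism to untangle them from $(C_i)_\rc$ alone. Even granting your recognition step, you would still need that two distinct edges of $T'$ cannot produce isomorphic rooted trees $(B_i,r')$ --- itself a nontrivial rigidity statement about rbm trees that your induction hypothesis on $|T|$ does not supply.

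This last rigidity is exactly where the paper spends its effort, and its route is different from yours. Rather than trying to invert recursive cutting, the paper extracts from $T_{e_i}$ only coarse data about the majority branch (the whole branch when it is even, or just its order when it is odd) and then proves directly that such data pins down the edge up to the odd automorphism. The work is packaged into the branch lemmas \cref{lem:minority-branch,lem:majority-branch,lem:majority-odd}; the crux is \cref{lem:majority-branch}, whose proof tracks the multiset of minority-branch sizes along the path from the mirror-bridge to the cut edge and shows this sequence distinguishes edges on the spine of $T'$. That combinatorial invariant is the missing idea your sketch gestures at but never produces, and without it (or a genuine substitute) the argument does not close.
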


\begin{lemma} \label{lem:power-of-2}
Let $T$ be a recursively bridge-mirrored tree with $2^k$ vertices whose recursive cutting results in $(T_0 = T, T_1, \dots, T_k = T_\rc)$. Then for each $1 \le i \le k$, $2^{k-i}$ is the largest power of $2$ that divides the number of vertices in a branch of $T$ cut from an edge in $T_{i-1}\setminus T_i$.
\end{lemma}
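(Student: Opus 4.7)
The plan is to induct on $k$. The base case $k = 0$ is vacuous since $T$ is a single vertex with no edges. For the inductive step with $k \geq 1$, I would leverage \cref{def:rbm}: $T$ is built from two identical rooted copies $T'$ and $T''$ of a recursively bridge-mirrored tree on $2^{k-1}$ vertices, joined by a new bridge $f$ from root to root.

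The first observation to establish is that $f$ is a mirror-bridge of $T$ (the involution swapping the two copies reverses $f$), and moreover that it is the unique edge of $T$ whose removal yields two equal-sized components (any other edge sits inside one of the halves and splits $T$ unevenly), so it is the only mirror-bridge of $T$. Hence the recursive cutting of $T$ removes exactly $f$ at its first step, giving $T_1 = T' \sqcup T''$. For $i \geq 2$, the process then decomposes into the parallel recursive cuttings of $T'$ and $T''$, so $T_{i-1} \setminus T_i$ is the disjoint union of the corresponding edge sets arising in $T'$ and $T''$.

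The case $i = 1$ is immediate: the two branches of $T$ cut from $f$ are $T'$ and $T''$, each with $2^{k-1} = 2^{k-i}$ vertices. For $i \geq 2$, take $e \in T_{i-1} \setminus T_i$; by symmetry assume $e \in E(T')$, so that $e$ lies in the $(i-1)$-th layer of $T'$'s own recursive cutting. Applying the inductive hypothesis to $T'$, the two branches of $T'$ cut from $e$ have sizes $b_1$ and $b_2$ with $b_1 + b_2 = 2^{k-1}$, each having 2-adic valuation exactly $k - i$. Cutting $T$ at $e$ yields two branches of sizes $c$ and $2^k - c$, where $c \in \{b_1, b_2\}$ is the size of the half of $T'$ lying on the side of $e$ away from the root attached to $f$. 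The branch of size $c$ has 2-part $2^{k-i}$ by induction; and since $i \geq 2$ gives $2^{k-i+1} \mid 2^k$, one has $2^k - c \equiv -c \pmod{2^{k-i+1}}$, so the other branch also has 2-part exactly $2^{k-i}$.

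The only genuinely delicate point is verifying that the recursive cutting of $T$ really does peel off the constructing bridge $f$ first and then operate on $T'$ and $T''$ in parallel; once this structural fact is in hand, the rest is clean arithmetic modulo $2^{k-i+1}$. I do not anticipate a serious obstacle.
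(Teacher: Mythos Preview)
Your argument is correct. The paper's own proof is simply ``Induction on $i$ (easy and omitted)'', so there is little to compare against; you have supplied a complete argument. The one difference is the induction variable: you induct on $k$ (the size of the tree), pulling off the top-level constructing bridge $f$ and appealing to the hypothesis on the half $T'$, whereas the paper gestures at inducting on $i$. Both routes unwind the same recursive structure. Your choice is arguably the more natural one, since it matches \cref{def:rbm} directly and makes the identity $T_i = T'_{i-1}\sqcup T''_{i-1}$ (for $i\ge 1$) transparent; the only point needing care, as you flagged, is that $f$ is the unique mirror-bridge of $T$, which you verified via the size argument. The arithmetic at the end (passing from the branch sizes $b_1,b_2$ in $T'$ to $c$ and $2^k-c$ in $T$ and checking the $2$-adic valuation using $2^{k-i+1}\mid 2^k$) is clean and correct.
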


\begin{proof} Induction on $i$ (easy and omitted).
\end{proof}

\begin{lemma} \label{lem:adjacent-edge-sizes}
Let $T$ be a recursively bridge-mirrored tree. If two distinct branches of $T$ have the same number of vertices, then they are cut from edges that do not share endpoints.
\end{lemma}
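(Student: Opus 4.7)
The plan is to combine \cref{lem:power-of-2} with the structural fact that at each step of the recursive cutting, each component of the current forest contributes exactly one removed edge. Let $T$ have $2^k$ vertices with recursive cutting sequence $(T_0=T, T_1, \ldots, T_k = T_\rc)$. Suppose $B_1$ and $B_2$ are two branches of the same size, cut from distinct edges $e_1$ and $e_2$ of $T$, and let $i_j$ be the unique index with $e_j \in T_{i_j-1}\setminus T_{i_j}$.

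First I would apply \cref{lem:power-of-2} to both branches to deduce that $2^{k-i_j}$ is the largest power of $2$ dividing $|B_j|$. Since $|B_1| = |B_2|$, this forces $i_1 = i_2 =: i$, so $e_1$ and $e_2$ are removed at the same step of the recursive cutting.

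Next, I would invoke the fact recorded in the remark preceding \cref{prop:directed}: every nontrivial recursively bridge-mirrored tree has exactly two automorphisms, so its odd automorphism reverses exactly one edge, namely a unique mirror-bridge. Each component of $T_{i-1}$ is a nontrivial RBM tree on $2^{k-i+1}\geq 2$ vertices (by the construction of the recursive cutting, $T_j$ is always a disjoint union of isomorphic RBM trees), so each such component contributes exactly one edge---its unique mirror-bridge---to $T_{i-1}\setminus T_i$.

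Finally, since $e_1 \neq e_2$ both lie in $T_{i-1}\setminus T_i$, they must belong to distinct components of $T_{i-1}$, which are vertex-disjoint subtrees of $T$; hence $e_1$ and $e_2$ share no endpoint, as desired. The argument is short, and the main substantive input is the uniqueness of the mirror-bridge in each component of $T_{i-1}$, which is immediate from the two-automorphism structure of RBM trees; I do not anticipate any real obstacle.
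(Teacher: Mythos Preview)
Your proof is correct and follows essentially the same route as the paper's: use \cref{lem:power-of-2} to place both cut edges in the same layer $T_{i-1}\setminus T_i$, and then observe that the edges in that layer are the mirror-bridges of distinct components of $T_{i-1}$, hence pairwise vertex-disjoint. The paper states this in one sentence; your additional appeal to the two-automorphism remark to justify uniqueness of each component's mirror-bridge is fine but not strictly necessary, since this uniqueness is already implicit in the well-definedness of the recursive cutting procedure.
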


\begin{proof}
By \cref{lem:power-of-2}, the two cut edges lie in the same  $T_i \setminus T_{i+1}$ for some $i$, and no two edges of $T_i \setminus T_{i+1}$ share a vertex since they are the mirror-bridges of the components of $T_i$.
\end{proof}

\begin{lemma}[Minority branch] \label{lem:minority-branch}
Let $T$ be a recursively bridge-mirrored tree. Let $(A_1, u_1)$ and $(A_2, u_2)$ be two branches of $T$. 
\begin{enumerate}
	\item[(a)] If $|A_1| = |A_2| \le |T|/2$, then $(A_1, u_1)$ and $(A_2, u_2)$ are isomorphic as rooted trees.
	\item[(b)] If $|T|/4 \leq |A_1| = |A_2| \le |T|/2$, then $T$ has an automorphism carrying $(A_1, u_1)$ to $(A_2, u_2)$.
\end{enumerate}
\end{lemma}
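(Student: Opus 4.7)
The proof strategy is strong induction on $|T|$, where $|T|=2^k$ by the RBM construction. The base cases $|T|\in\{1,2\}$ are immediate (vacuous, or single-vertex branches swapped by the edge-reversal). For $|T|\ge 4$, decompose $T = A \cup \{f\} \cup B$, where $A\cong B\cong S$ are the two half-branches (each isomorphic to some rooted RBM tree $S$) and $f = r_A r_B$ is the mirror-bridge connecting their construction roots.

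The first step is a size analysis for branches of $T$. Cutting $f$ produces two branches of size $|T|/2$, each isomorphic as a rooted tree to $(S, r_S)$, while cutting any non-bridge edge $e\in A$ yields a ``small side'' of size $\le|T|/2-1$ lying entirely in $A$ and a ``large side'' of size $\ge|T|/2+1$ containing all of $B$. By \cref{lem:power-of-2} applied to $T$, the mirror-bridge is the unique edge producing an equal split, so only $f$ contributes size-$|T|/2$ branches. Hence every branch of $T$ of size $m<|T|/2$ is precisely a branch of $A$ or $B$ not containing the corresponding root.

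For part (a), the case $m=|T|/2$ is done above. For $m\le|T|/4=|A|/2$, the inductive hypothesis (a) applied to $A$, together with $A\cong B$, suffices. For $m=|T|/4$, \cref{lem:power-of-2} applied to $A$ yields a unique branch of $A$ of size $|A|/2$ not containing $r_A$ (the opposite half). The main obstacle is the subcase $|T|/4<m<|T|/2$, where the branches correspond to subtrees of $A$ of size exceeding $|A|/2$ and the inductive hypothesis does not directly apply. The saving observation is a general tree-theoretic fact: rooting $A$ at $r_A$, any two non-root vertices with subtree size $>|A|/2$ must be in ancestor-descendant relation (otherwise their subtrees would be disjoint with sizes summing to more than $|A|$), so they lie on a single root-to-leaf path along which subtree sizes strictly decrease; hence at most one branch of $A$ of each size $>|A|/2$ avoids $r_A$. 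Combined with $A\cong B$, this resolves (a).

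For part (b), in both cases $m=|T|/2$ and $|T|/4<m<|T|/2$, the uniqueness just established implies that two distinct branches of size $m$ must come one from $A$ and one from $B$. The mirror-bridge automorphism of $T$, which exists by the RBM construction (and is the unique nontrivial automorphism per the preceding remark), swaps $A$ and $B$ and thereby carries one branch onto the other.
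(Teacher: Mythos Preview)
Your proof is correct and follows essentially the same approach as the paper: induction on $|T|$, reduction to the half-branches, handling small sizes by the inductive hypothesis, and handling sizes $>|T|/4$ by arguing that the two roots must lie on a common path (you phrase this via the ancestor--descendant relation in $A$ rooted at $r_A$; the paper phrases it as both roots lying on the path between the mirror-bridge of $T$ and that of $T'$). The only cosmetic differences are that the paper first applies an automorphism to place both branches in the same half (rather than carrying the $A\cong B$ identification throughout) and treats the range $|T|/4\le m\le |T|/2$ uniformly without separating out $m=|T|/2$ via \cref{lem:power-of-2}.
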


\begin{remark}
Here is an example of two branches (highlighted) where $(A_1, u_1) \cong (A_2, u_2)$ but no automorphism of $T$ carries one to the other:
\[
\begin{tikzpicture}
\foreach \x in {1,...,8}
	\foreach \y in {0,1}
		\node[v] (\x\y) at (\x,\y) {};
\draw[b] (10)--(20)--(30);
\draw[r] (11)--(21)--(31);
\draw (30)--(40)--(41)--(31) (41)--(51)--(61)--(71)--(81) (51)--(50)--(60)--(70)--(80);
\end{tikzpicture}
\]
\end{remark}

\begin{proof}
	We apply induction on $|T|$, with $|T| \le 2$ being trivial. Let $T'$ be a half-branch of $T$. Applying an automorphism of $T$ if necessary, we may assume that $A_1, A_2 \subseteq T'$.
	
	If $|A_1| < |T|/4$, then (a) follows by applying the induction hypothesis to $|T'|$.
	
	Now assume $|T|/4 \le |A_1| \le |T|/2$. Then both $A_1$ and $A_2$ have at least $|T'|/2$ vertices and hence each contains at least one vertex of the mirror-bridge of $T'$. So $u_1$ and $u_2$ both lie on the path between the mirror-bridges of $T$ and $T'$. It follows that $u_1 = u_2$ or else one of $A_1$ and $A_2$ would strictly contain the other, which is impossible as $|A_1| = |A_2|$. Hence $(A_1, u_1) = (A_2, u_2)$, and the result follows recalling the automorphism we may have applied initially.
\end{proof}

Let $\min(G)$ denote the number of vertices in the smallest component of $G$. For $F \subseteq G$, write $m(G, F)$ for the multiset with elements $\min(G \setminus e)$ over all $e \in E(F)$. For graphs $A$ and $B$, we write $A \cap B$ for the graph $(V(A) \cap V(B), E(A) \cap E(B))$.

\begin{example}
In the following graph $G$, each edge $e$ is labeled by $\min(G \setminus e)$:
\[
\begin{tikzpicture}[font=\scriptsize]
\foreach \x in {0,...,7}\foreach \y in {0,1}
\node[v] (\x\y) at (\x,\y) {};
\draw (00)--node[below] {$1$} (10)
          --node[below] {$2$} (20)
          --node[below] {$3$} (30)
          --node[left] {$4$} (31)
          --node[above] {$3$} (21)
          --node[above] {$2$} (11)
          --node[above] {$1$} (01);
\draw (70)--node[below] {$1$} (60)
          --node[below] {$2$} (50)
          --node[below] {$3$} (40)
          --node[right] {$4$} (41)
          --node[above] {$3$} (51)
          --node[above] {$2$} (61)
          --node[above] {$1$} (71);
\draw (31)-- node[above] {$8$}(41);
	
\end{tikzpicture}
\]
\end{example}

\begin{lemma} \label{lem:majority-1}
Let $T$ be a recursively bridge-mirrored tree and $T'$ a half-branch of $T$. Let $(B_1, v_1)$ and $(B_2, v_2)$ be branches of $T$ with both $v_1$ and $v_2$ contained in $T'$. If $B_1$ and $B_2$ are isomorphic as trees and $|B_1| = |B_2| > |T|/2$, then 
$m(B_1, B_1 \cap T') = m(B_2, B_2 \cap T')$ and 
$m(T, B_1 \cap T') = m(T, B_2 \cap T')$.
\end{lemma}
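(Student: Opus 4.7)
The plan is to exploit the hypothesis $|B_i| > |T|/2$, which forces $u_iv_i$ to be an edge of $T'$ with $A_i \subseteq T'$ the smaller side and not reaching the mirror-bridge. Let $f_0 = ab$ be the mirror-bridge of $T$, with $a \in T'$ and $b \in T''$, where $T''$ is the other half-branch; then $B_i$ contains all of $T''$ together with $f_0$, giving
\[
E(B_i) = E(B_i \cap T') \sqcup \{f_0\} \sqcup E(T''), \qquad E(T') = E(B_i \cap T') \sqcup \{u_iv_i\} \sqcup E(A_i).
\]
Since $|A_i| = |T| - |B_i| < |T|/2$, \cref{lem:minority-branch}(a) gives $(A_1,u_1) \cong (A_2,u_2)$ as rooted trees.

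For (i), I would start from $m(B_1,B_1) = m(B_2,B_2)$, which holds because $B_1 \cong B_2$, and decompose both sides according to the first edge partition above. Cutting $f_0$ in $B_i$ produces components of sizes $|T|/2$ and $|B_i| - |T|/2$, so this contributes the same singleton $\{|B_i| - |T|/2\}$ on each side. For $e \in E(T'')$, cutting $e$ in $B_i$ yields components of sizes $\alpha(e)$ and $|B_i| - \alpha(e)$, where $\alpha(e)$ is the size of the component of $T'' \setminus e$ not containing $b$; so $m(B_i, T'')$ depends only on $T''$ and $|B_i|$, and is identical for $i=1,2$. Cancelling these two common pieces leaves $m(B_1, B_1 \cap T') = m(B_2, B_2 \cap T')$.

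For (ii), note that for $e \in E(T')$, cutting $e$ in $T$ yields components of sizes $\beta(e)$ and $|T| - \beta(e)$, where $\beta(e) \le |T'| - 1 < |T|/2$ is the size of the component of $T' \setminus e$ not containing $a$; thus $\min(T \setminus e) = \beta(e)$. Summing over the second edge partition above,
\[
\{\beta(e) : e \in E(T')\} = m(T, B_i \cap T') \sqcup \{|A_i|\} \sqcup \{\beta(e) : e \in E(A_i)\}.
\]
For $e \in E(A_i)$, the component of $T' \setminus e$ not containing $a$ is actually contained in $A_i$ and coincides with the component of $A_i \setminus e$ not containing $u_i$, so $\{\beta(e) : e \in E(A_i)\}$ depends only on $(A_i,u_i)$ as a rooted tree; by \cref{lem:minority-branch}(a) these multisets agree for $i=1,2$. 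Combined with $|A_1| = |A_2|$ and the $i$-independence of the left-hand side, this gives $m(T, B_1 \cap T') = m(T, B_2 \cap T')$.

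No serious obstacle appears: once the two edge-partitions are in place and the correct invariants are identified (the unrooted isomorphism type of $B_i$ for (i); the rooted isomorphism type of $(A_i,u_i)$ for (ii)), both identities reduce to multiset bookkeeping. The only subtle checkpoint is that $\beta$ computed inside $T'$ matches the corresponding subtree-size invariant of $(A_i,u_i)$, which is a direct consequence of $A_i$ being precisely the subtree of $T'$ hanging below $u_i v_i$ away from $a$.
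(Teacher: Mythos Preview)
Your proof is correct and follows the same approach as the paper: for the first identity you start from $m(B_1,B_1)=m(B_2,B_2)$ and cancel the contributions of edges outside $T'$ (the paper does this in one stroke rather than separating $f_0$ from $E(T'')$, but the content is identical); for the second identity you decompose $m(T,T')$ via $E(T')=E(B_i\cap T')\sqcup\{u_iv_i\}\sqcup E(A_i)$ and cancel the $A_i$-part using $(A_1,u_1)\cong(A_2,u_2)$ from \cref{lem:minority-branch}(a), exactly as in the paper.
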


\begin{proof}
For every edge $e \in E(T)$ not contained in $T'$, one has $\min(B_1 \setminus e) = \min(B_2 \setminus e)$ since one of the components of $B_1 \setminus e$ and $B_2 \setminus e$ coincide, namely the component disjoint from $T'$. Removing from $m(B_1, B_1) = m(B_2, B_2)$ (as $B_1 \cong B_2$) the above contributions (namely the element $\min(B_1 \setminus e) = \min(B_2 \setminus e)$ for each $e \in E(T)$ not in $T'$), we obtain $m(B_1, B_1 \cap T') = m(B_2, B_2 \cap T')$.

For each $i = 1, 2$, let $(A_i, u_i)$ be the branch of $T$ obtained by removing $B_i$ (so that $|A_i| + |B_i| = |T|$). 
For any edge $e$ in $A_i$, the smaller branch of $T$ cut from $e$ coincides with the branch of $A_i$ cut from $e$ not containing $u_i$. By \cref{lem:minority-branch}, $(A_1, u_1) \cong (A_2, u_2)$, and this isomorphism induces the equality of multisets $m(T, A_1) = m(T, A_2)$. Also $\min(T \setminus u_iv_i) = |A_i|$, which is the same for $i=1,2$. Removing from $m(T, T')$ the above contributions (namely $m(T, A_1) = m(T, A_2)$ along with the element $|A_1| = |A_2|$), we find that $m(T, B_1 \cap T') = m(T, B_2 \cap T')$ as claimed.	
\end{proof}

\begin{lemma} \label{lem:majority-2}
Let $T$ be a recursively bridge-mirrored tree with mirror-bridge $f$, and $T'$ a half-branch of $T$. Let $A$ and $B$ be the two branches of $T$ cut from $g \in E(T')$, with $A$ contained in $T'$. Then for every $e \in E(B \cap T')$,
\[
	\min(T \setminus e) - \min(B \setminus e) = 
		\begin{cases}
			|A| & \text{if $e$ lies on the path in $T$ connecting $f$ and $g$,} \\
			0 & \text{otherwise.}
		\end{cases}
	\]
\end{lemma}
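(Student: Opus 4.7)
The plan is to carry out a direct bookkeeping of component sizes. Let $v$ denote the root of the branch $B$, i.e., the endpoint of $g$ inside $B$, and let $r$ denote the endpoint of $f$ inside $T'$. For any $e \in E(B) \cap E(T')$, write $B_e^v$ and $B_e^{\bar v}$ for the two components of $B \setminus e$, with $v \in B_e^v$. Since $A$ is attached to $B$ only through the edge $g$ at the vertex $v$, the two components of $T \setminus e$ are $B_e^v \cup A$ (joined via $g$) and $B_e^{\bar v}$, of sizes $|B_e^v| + |A|$ and $|B_e^{\bar v}|$ respectively. So the task reduces to comparing $\min(|B_e^v| + |A|, |B_e^{\bar v}|)$ with $\min(|B_e^v|, |B_e^{\bar v}|)$.

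The key structural input is that $B$ contains the entire half-branch of $T$ opposite to $T'$, which has exactly $|T|/2$ vertices. This is because the only edge connecting $T'$ to the opposite half-branch is $f$, with endpoint $r$ in $T'$, and $r$ lies strictly on the $v$-side of $g$ (since $A \subseteq T'$ and $A$ does not contain $r$). Consequently, whichever side of $B \setminus e$ ends up containing $r$ also contains the full opposite half-branch and therefore has size at least $|T|/2$.

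Now I would split into two cases. If $e$ lies on the path in $T$ connecting $f$ and $g$, then removing $e$ from $T$ (equivalently from $B$) separates $r$ from $v$, so $r \in B_e^{\bar v}$ and $|B_e^{\bar v}| \geq |T|/2$. Then $|B_e^v| + |A| = |T| - |B_e^{\bar v}| \leq |T|/2 \leq |B_e^{\bar v}|$, so both minima are attained on the $v$-side: $\min(T \setminus e) = |B_e^v| + |A|$ and $\min(B \setminus e) = |B_e^v|$, giving difference $|A|$. Otherwise, $r$ and $v$ stay in the same component of $B \setminus e$, so $r \in B_e^v$ and $|B_e^v| \geq |T|/2$, which forces $|B_e^{\bar v}| \leq |B| - |T|/2 = |T|/2 - |A| \leq |B_e^v|$. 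Both minima are then attained on the $\bar v$-side and equal $|B_e^{\bar v}|$, yielding difference $0$.

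There is no real obstacle beyond correctly identifying $v$ as the endpoint of $g$ on the same side of $T$ as $f$ (forced by the hypothesis $A \subseteq T'$) and noting that the opposite half-branch of $T$ has exactly $|T|/2$ vertices. Both of these are immediate from the recursively bridge-mirrored structure of $T$.
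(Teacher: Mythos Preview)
Your proof is correct and follows essentially the same approach as the paper's. The paper's argument is terser—it simply asserts that when $e$ lies on the $f$--$g$ path the \emph{larger} branches of $T\setminus e$ and $B\setminus e$ coincide (so the smaller ones differ by $V(A)$), and that otherwise the \emph{smaller} branches coincide—whereas you spell out explicitly, via the $|T|/2$ lower bound coming from the opposite half-branch, which side actually realizes the minimum in each case. That added bookkeeping is exactly what justifies the paper's bare assertion, so the two arguments are the same in substance.
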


\begin{proof}
	If $e$ lies on the path in $T$ connecting $f$ and $g$, then the larger branches of $T$ and $B$ cut from $e$ coincide, and hence the vertices of the smaller branches differ by $V(A)$. If $e$ does not lie on this path, then the smaller branches of $T$ and $B$ cut from $e$ coincide.
\end{proof}

\begin{lemma}[Majority branch] \label{lem:majority-branch}
Let $T$ be a recursively bridge-mirrored tree. Let $(B_1, v_1)$ and $(B_2, v_2)$ be two branches of $T$ such that $B_1$ and $B_2$ are isomorphic and each has at least $|T|/2$ vertices. Then $T$ has an automorphism carrying $v_1$ to $v_2$.
\end{lemma}

\begin{proof}
If $|B_1| = |B_2| = |T|/2$, then $B_1$ and $B_2$ are both half-branches of $T$. So assume that $|B_1| = |B_2| > |T|/2$. 

Suppose that each $B_i$ is a branch of $T$ cut from edge $f_i$. We may assume without loss of generality that $f_1$ and $f_2$ lie in the same half-branch $T'$ of $T$. We wish to show that $f_1 = f_2$.

By \cref{lem:majority-1}, $m(B_1, B_1 \cap T') = m(B_2, B_2 \cap T')$ and 
$m(T, B_1 \cap T') = m(T, B_2 \cap T')$.
By \cref{lem:majority-2}, for each $i = 1, 2$, $m(B_i, B_i \cap T')$ can be obtained from $m(T, B_i \cap T')$ by subtracting $|T| - |B_i|$ from an element equal to $\min (T\setminus e) \in m(T, B_i \cap T')$ for each edge $e$ on the path $P_i$ from  the mirror-bridge of $T$ to $f_i$. 
It follows that the multiset of elements $\min(T \setminus e)$, as $e$ ranges over the edges of $P_i$, does not depend on $i$. 

For each $i$, as the edge $e$ walks along $P_i$ from the mirror-bridge of $T$ to $f_i$, 
$\min(T\setminus e)$ is strictly decreasing. 
Also, by \cref{lem:adjacent-edge-sizes}, the size of the smaller component of $T\setminus e$ must differ at the first instance when $P_1$ and $P_2$ diverge, 
contradicting the claim at the end of the previous paragraph. 
Therefore $f_1 = f_2$.
\end{proof}

\begin{lemma}\label{lem:majority-odd}
Let $T$ be a recursively bridge-mirrored tree with at least two vertices. Let $B_1$ and $B_2$ be two branches of $T$ with $|B_1| = |B_2| \ge |T|/2$. If both $B_1$ and $B_2$ are odd, then $T$ has an automorphism carrying $B_1$ to $B_2$.	
\end{lemma}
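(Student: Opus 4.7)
The plan is to proceed by induction on $|T|$. If $|B_1|=|B_2|=|T|/2$, then by \cref{lem:power-of-2} each must be cut from an edge in $T_0\setminus T_1$, hence from the mirror-bridge $f$ of $T$; so $B_1$ and $B_2$ are the two half-branches of $T$, which are either equal or swapped by the odd automorphism $\tau$.

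Assume now $|B_1|=|B_2|>|T|/2$. Write $T=L\cup\{f\}\cup R$ with $f=pq$, $p\in L$, $q\in R$. Each $B_i$ contains $f$, and after applying $\tau$ to $B_2$ if necessary, I may assume both cut edges $e_i=u_iv_i$ (with $v_i\in B_i$) lie in $L$; then $B_i=R\cup\{f\}\cup C_i$, where $C_i$ is the component of $L\setminus e_i$ containing $p$, and $A_i$ is the other component, rooted at $u_i$. I would split into cases by $|A_i|$: if $|A_i|>|T|/4$, then \cref{lem:minority-branch}(b) gives an automorphism of $T$ carrying $(A_1,u_1)$ to $(A_2,u_2)$, and hence $B_1$ to $B_2$. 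If $|A_i|=|T|/4=|L|/2$, then \cref{lem:power-of-2} applied to $L$ makes $A_i$ the unique half-branch of $L$ not containing $p$, so $A_1=A_2$ and $B_1=B_2$.

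This leaves the substantive sub-case $|A_i|<|T|/4$, which is where oddness must be used. Let $f_i$ denote the mirror-bridge of $B_i$; a short case analysis rules out $f_i=f$ (which would force $|R|=|C_i|$) and $f_i\in E(C_i)$ (which would force $(|B_i|-|T|)/2\ge 0$), so $f_i\in E(R)$. Writing the components of $R\setminus f_i$ as $R'_i$ (not containing $q$) and $R''_i$ (containing $q$), the mirror-bridge condition yields an isomorphism $R'_i\cong R''_i\cup\{f\}\cup C_i$, with $|R'_i|=|B_i|/2$ and $|R''_i|=|A_i|/2<|R|/4$.

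The hard part will then be promoting this structural data to $B_1\cong B_2$ as unrooted trees, after which \cref{lem:majority-branch} yields the desired automorphism of $T$. My plan is to first apply \cref{lem:minority-branch}(a) to $R$, using that each $R''_i$ is a minority branch of $R$ rooted at the endpoint $y_i$ of $f_i$, to obtain $(R''_1,y_1)\cong(R''_2,y_2)$ as rooted trees; and then to use the isomorphism $R'_i\cong R''_i\cup\{f\}\cup C_i$ to recover $(C_i,p)$ up to rooted isomorphism, the point being that this iso identifies a canonical subtree of $R'_i$ corresponding to $R''_i$ so that $(C_i,p)$ appears as the attached ``complementary'' piece. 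Making this identification canonical in spite of possible ambiguities in the iso is the delicate step, and I expect it may require unwinding $R$'s recursive bridge-mirroring one layer and reinvoking the inductive hypothesis on the resulting smaller recursively bridge-mirrored subtree.
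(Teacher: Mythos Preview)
Your case analysis on $|A_i|$ is the wrong organizing principle, and the ``delicate step'' you flag is a real gap: recovering $(C_i,p)$ canonically from the isomorphism $R'_i\cong R''_i\cup\{f\}\cup C_i$ is not straightforward, and your sketch of ``unwinding $R$'s recursive structure and reinvoking induction'' is not yet an argument. You have not shown $B_1\cong B_2$ in the sub-case $|A_i|<|T|/4$.

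The paper's proof avoids all of this by looking at the \emph{right} minority branch. Since $B_i$ is odd, it has a mirror-bridge; let $(C_i,r_i)$ be the half-branch of $B_i$ that does \emph{not} contain the mirror-bridge of $T$. Then $(C_i,r_i)$ is itself a branch of $T$, and $|C_i|=|B_i|/2$ lies in $[|T|/4,\,|T|/2]$ automatically---no case split on $|A_i|$ is needed. Now \cref{lem:minority-branch}(b) gives an automorphism of $T$ carrying $(C_1,r_1)$ to $(C_2,r_2)$; in particular $(C_1,r_1)\cong(C_2,r_2)$, and since $B_i$ is obtained by mirroring $(C_i,r_i)$ across its root, $B_1\cong B_2$. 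Finish with \cref{lem:majority-branch}.

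Note that in your own notation for the hard case, the paper's $C_i$ is precisely your $R'_i$: you already computed $|R'_i|=|B_i|/2>3|T|/8>|T|/4$ and $R'_i\subseteq R$ so $|R'_i|\le|T|/2$. You should have applied \cref{lem:minority-branch}(b) to $R'_i$ directly, rather than \cref{lem:minority-branch}(a) to the much smaller $R''_i$. That single change collapses your entire hard case to two lines, eliminates the gap, and in fact shows that the earlier cases $|A_i|\ge|T|/4$ (where you did not use oddness at all) were unnecessary detours.
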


\begin{proof}
For each $i = 1, 2$, since $B_i$ is odd, it has a mirror-bridge $e_i$ by \cref{lem:mirror-bridge}.
Let $(C_i, r_i)$ be the half-branch of $B_i$ not containing the mirror-bridge of $T$.
Since $|T|/4 \le |C_1| = |C_2| \le |T|/2$, by \cref{lem:minority-branch}, $T$ has an automorphism carrying $(C_1, r_1)$ to $(C_2, r_2)$.
Since $(C_1, r_1) \cong (C_2, r_2)$, we have $B_1 \cong B_2$, and the conclusion follows by \cref{lem:majority-branch}.
\end{proof}

Now we are ready to prove the main result of this section, thereby completing the proof of the main result \cref{thm:main} of this paper.

\begin{proof}[Proof of \cref{prop:tree-rc}]
	For each $i=1,2$, let $A_i$ and $B_i$ be the two components of $T\setminus e_i$, with $|A_i| \le |B_i|$.
	
	If $B_1$ is even, then it is a component of $T_{e_1}$ with at least half of the vertices. Since $T_{e_1} \cong T_{e_2}$, $B_2$ must also be an even component of $T_{e_2}$, and $B_1 \cong B_2$, and the conclusion follows from \cref{lem:majority-branch}.
	
	So assume that both $B_1$ and $B_2$ are odd. Since $T_{e_1} \cong T_{e_2}$, we have $|B_1| = |B_2|$ (note we can identify which components of $T_{e_1}$ came from $B_1$ versus $A_1$ based on the largest odd factor of its order), and the conclusion then follows from \cref{lem:majority-odd}.
\end{proof}

\section{Further questions} \label{sec:further}

\subsection{Positive and negative digraphs}
For undirected graphs, Sidorenko's conjecture says that for all bipartite $H$, one has $t(H, W) \ge t(K_2, W)^{|E(H)|}$ for every graphon $W \colon [0,1]^2 \to [0,1]$, where graphons satisfy $W(x,y) = W(y,x)$. As mentioned in the introduction, Sidorenko's conjecture has been proved for several families of $H$ but remains open in general.

\cref{prop:tourneyon} tells us that $\vec H$ is impartial if and only if $t(\vec H, W) = 2^{-|E(H)|}$ for all tourneyons $W$, which satisfy $W(x,y) + W(y,x) = 1$ for all $x,y \in [0,1]^2$, unlike graphons. Our classification of impartial graphs can be viewed as the equality case of a directed analog of Sidorenko's conjecture. 

What about inequalities for digraphs? 

Let us call a digraph $\vec H$ \emph{positive} if $t(\vec H, W) \ge 2^{-|E(H)|}$ for all tourneyons $W$, and \emph{negative} if $t(\vec H, W) \le 2^{-|E(H)|}$ for all tourneyons $W$.

\newcommand{\pathaa}{\begin{tikzpicture}[scale=.5]
\node[v] (0) at (0,0) {};
\node[v] (1) at (1,0) {};
\node[v] (2) at (2,0) {};
\draw[e] (0)--(1);
\draw[e] (1)--(2);
\end{tikzpicture}
}
\newcommand{\pathba}{\begin{tikzpicture}[scale=.5]
\node[v] (0) at (0,0) {};
\node[v] (1) at (1,0) {};
\node[v] (2) at (2,0) {};
\draw[e] (1)--(0);
\draw[e] (1)--(2);
\end{tikzpicture}
}
\newcommand{\pathab}{\begin{tikzpicture}[scale=.5]
\node[v] (0) at (0,0) {};
\node[v] (1) at (1,0) {};
\node[v] (2) at (2,0) {};
\draw[e] (0)--(1);
\draw[e] (2)--(1);
\end{tikzpicture}
}
\newcommand{\pathaaa}{\begin{tikzpicture}[scale=.5]
\node[v] (0) at (0,0) {};
\node[v] (1) at (1,0) {};
\node[v] (2) at (2,0) {};
\node[v] (3) at (3,0) {};
\draw[e] (0)--(1);
\draw[e] (1)--(2);
\draw[e] (2)--(3);
\end{tikzpicture}
}
\newcommand{\pathaba}{\begin{tikzpicture}[scale=.5]
\node[v] (0) at (0,0) {};
\node[v] (1) at (1,0) {};
\node[v] (2) at (2,0) {};
\node[v] (3) at (3,0) {};
\draw[e] (0)--(1);
\draw[e] (2)--(1);
\draw[e] (2)--(3);
\end{tikzpicture}
}
\newcommand{\pathaab}{\begin{tikzpicture}[scale=.5]
\node[v] (0) at (0,0) {};
\node[v] (1) at (1,0) {};
\node[v] (2) at (2,0) {};
\node[v] (3) at (3,0) {};
\draw[e] (0)--(1);
\draw[e] (1)--(2);
\draw[e] (3)--(2);
\end{tikzpicture}
}
\newcommand{\pathbaa}{\begin{tikzpicture}[scale=.5]
\node[v] (0) at (0,0) {};
\node[v] (1) at (1,0) {};
\node[v] (2) at (2,0) {};
\node[v] (3) at (3,0) {};
\draw[e] (1)--(0);
\draw[e] (1)--(2);
\draw[e] (2)--(3);
\end{tikzpicture}
}

\begin{example}[2-edge path]
The digraphs \pathab and \pathba are positive since for any tourneyon $W$,
\[
\int W(x,y)W(z,y) \,dxdy
= \int \paren{\int W(x,y) \, dx}^2 \, dy
\ge \paren{\int W(x,y) \, dxdy}^2 
= 2^{-2}
\]
whereas \pathaa is negative since $2t(\pathaa, W) + t(\pathab, W) + t(\pathba, W) = 1$ for any tourneyon $W$.
\end{example}

Let $H$ be a bipartite graph with bipartition $A \cup B$. Orient all edges from $A$ to $B$ to obtain a digraph $\vec H$. If $H$ satisfies a strengthened bipartite version of Sidorenko's conjecture (all known cases of Sidorenko's conjecture are proved under this strengthening), then $\vec H$ is positive.
 
\begin{example}[3-edge path]
The digraph \pathaba is positive, as it follows from the proof of Sidorenko's conjecture for a 3-edge path (see the responses to this MathOverflow post~\cite{overflow-sid} for some nice and short proofs).

The digraph \pathaaa is negative since $t(\pathaba, W) + t(\pathaaa, W) = 1/4$ for all tourneyons $W$.

The remaining orientations $\pathaab$ and $\pathbaa$ are impartial.
\end{example}

There are recursive ways to build up positive graphs. For example, construct $\vec H$ by taking a disjoint union of $\vec H_1$ and $\vec H_2$ and adding a new directed edge from every vertex of $\vec H_1$ to every vertex of $\vec H_2$. We leave it as a fun exercise\footnote{Hint: by first fixing the embedding of $\vec H_1$, and applying the positivity of $\vec H_2$, we can replace $\vec H_2$ by an empty graph on $|H_2|$ vertices. Likewise with $\vec H_1$. Then it remains to show that the complete bipartite graph is positive, which follows from H\"older's inequality.} to show that if $\vec H_1$ and $\vec H_2$ are positive, then so is $\vec H$.

As a corollary of the above recursive construction, we see that all transitive tournaments are positive, which was known~\cite{CR17}. Also, a positive tournament must be transitive since it has to be embedded into a transitive tournament.

It appears to be a challenging problem to classify all positive and negative digraphs. In particular, the problem of classifying positive digraphs includes Sidorenko's conjecture as a special case.

\begin{problem} \label{prob:pos-neg}
	Determine all positive and negative digraphs.
\end{problem}

The problem is somewhat reminiscent of the \emph{positive graph conjecture} \cite{ACHL16}. We say that a graph $G$ is \emph{positive} if $t(H, W)\ge 0$ for every measurable $W \colon [0,1]^2 \to [-1,1]$ 
(i.e., graphons but allowing negative values) with $W(x,y) = W(y,x)$. For example, an application of the Cauchy--Schwarz inequality shows that $H = C_4$ is positive. A similar Cauchy--Schwarz application shows that any graph $H$ obtained by gluing some graph to itself along an independent set is positive, and the conjecture says that every positive graph has this form.

Recently, a reverse Sidorenko inequality was established~\cite{SSSZ}, showing that, for instance, for a triangle-free $d$-regular graph $H$, the $H$-density in a graph is always upper-bounded by the appropriately normalized $K_{d,d}$-density. It may be interesting to explore directed versions of such reverse Sidorenko inequalities.

\subsection{Directed hypergraphs}

The notion of impartiality can also be generalized to hypergraphs. Here one needs to specify what is meant by a directed hypergraph and what is the analog of a tournament.  What's a ``directed edge''?

Here are two possible definitions for how to orient a triple in a 3-uniform hypergraph.

One possible definition of a directed triple is to pick one of the 6 permutations of the triple.  Then the corresponding generalization of a tourneyon is a measurable function $W \colon [0,1]^3 \to [0,1]$ with $W(x,y,z) + W(x,z,y) + W(y,x,z) + W(y,z,x) + W(z,x,y) + W(z,y,x) = 1$.

Another possible definition of a directed triple is to pick one of the two possible signs for a permutation of a triple (i.e., pick one of the two equivalence classes of the 6 permutations). Then, on top of the earlier constraint for $W$, one should also add $W(x,y,z) = W(y,z,x) = W(z,x,y)$ and $W(x,z,y) = W(z,y,x) = W(y,x,z)$.

For either notion, we can define an \emph{impartial} directed 3-uniform hypergraph $\vec H$ as one whose density $t(\vec H, W)$
is constant for all 3-uniform hypertourneyons $W$. Similarly, we say that $\vec H$ is \emph{positive}/\emph{negative} if $t(\vec H, W)$ is \emph{minimized}/\emph{maximized} by the constant tourneyon.

For $k$-uniform hypergraphs, we generalize the above discussion by selecting a proper normal subgroup $N$ of the symmetric group $S_k$, and ``orienting'' each $k$-tuple by selecting a coset of $N$ acting on the $k$-tuple. The corresponding constraint for hypertourneyons is $W(x_1, \dots, x_k) = W(x_{\sigma(1)}, \dots, x_{\sigma(k)})$ for all $\sigma \in N$. Recall that the only normal subgroups of $S_k$ with $k \ge 2$ are the trivial group and the alternating group, except $k=4$ where there is also the Klein four group. In the $k=3$ discussion earlier, the first possibility corresponds to $N$ being the trivial group, and the second corresponds to $N$ being the alternating group.

\begin{problem}
	Determine all impartial directed hypergraphs for each notion of directed hypergraphs.
\end{problem}

We can also ask for positive and negative directed hypergraphs. Though, we do not even know which undirected hypergraphs satisfy the generalization of Sidorenko's conjecture, and we are not aware of any plausible conjectures for this problem.

\medskip \noindent\textbf{Acknowledgments.}
We thank the anonymous referees for careful readings and helpful comments.


\end{document}